\documentclass[11pt,leqno,twoside]{amsart}
\usepackage{amssymb,amsmath,amsthm,soul,color}
\usepackage{t1enc}
\usepackage[cp1250]{inputenc}
\usepackage{a4,indentfirst,latexsym}
\usepackage{graphics}
\usepackage{mathrsfs}
\usepackage{cite,enumitem,graphicx}
\usepackage[colorlinks=true,urlcolor=blue,
citecolor=red,linkcolor=blue,linktocpage,pdfpagelabels,
bookmarksnumbered,bookmarksopen]{hyperref}
\usepackage[english]{babel}
\usepackage[left=2.61cm,right=2.61cm,top=2.72cm,bottom=2.72cm]{geometry}
\usepackage[metapost]{mfpic}
\usepackage[hyperpageref]{backref}
\usepackage[colorinlistoftodos]{todonotes}
\usepackage[normalem]{ulem}

\allowdisplaybreaks

\makeatletter
\providecommand\@dotsep{5}
\def\listtodoname{List of Todos}
\def\listoftodos{\@starttoc{tdo}\listtodoname}
\makeatother

\newtheorem{Th}{Theorem}[section]
\newtheorem{Prop}[Th]{Proposition}
\newtheorem{Lem}[Th]{Lemma}

\newtheorem{Def}[Th]{Definition}

\newcommand{\vp}{\varphi}

\newcommand{\eps}{\varepsilon}

\def\div{\mathop{\mathrm{div}\,}}

\def\supp{\mathrm{supp}}

\def\id{\mathrm{id}}

\def\N{\mathbb{N}}
\def\R{\mathbb{R}}

\def\curlop{\nabla\times}
 %P family
 %U cover
 %U neighborhood of graph

\def\J{\mathcal{J}}

\def\D{\mathcal{D}}

\def\A{\mathcal{A}}

\newcommand{\cA}{{\mathcal A}}

\newcommand{\cC}{{\mathcal C}}
\newcommand{\cD}{{\mathcal D}}

\newcommand{\cF}{{\mathcal F}}

\newcommand{\cJ}{{\mathcal J}}

\newcommand{\cL}{{\mathcal L}}
\newcommand{\cM}{{\mathcal M}}

\newcommand{\cO}{{\mathcal O}}

\newcommand{\cS}{{\mathcal S}}

\newcommand{\al}{\alpha}
\newcommand{\be}{\beta}
\newcommand{\ga}{\gamma}

\newcommand{\Om}{\Omega}

\def\curlop{\nabla\times}
\newcommand{\weakto}{\rightharpoonup}

\def\id{\mathrm{id}}

\newenvironment{altproof}[1]
{\noindent%\addvspace{0.3cm}
	{\em Proof of {#1}}.}
{\nopagebreak\mbox{}\hfill $\Box$\par\addvspace{0.5cm}}

\numberwithin{equation}{section}

\begin{document}
	\title[Between Maxwell and Born-Infeld]{Between Maxwell and Born-Infeld:\\ the presence of the magnetic field}
	\author[P. d'Avenia]{Pietro d'Avenia}
	\author[J. Mederski]{Jaros\l aw Mederski}
	\address[P. d'Avenia]{\newline\indent
		Dipartimento di Meccanica, Matematica e Management
		\newline\indent 
		Politecnico di Bari
		\newline\indent
		Via Orabona 4,  70125  Bari, Italy}
	\email{\href{mailto:pietro.davenia@poliba.it}{pietro.davenia@poliba.it}}
	
	\address[J. Mederski]{\newline\indent 
		Institute of Mathematics,
		\newline\indent
		Polish Academy of Sciences,
		\newline\indent 
		ul. \'Sniadeckich 8, 00-956
		Warszawa, Poland}
	\email{\href{mailto:jmederski@impan.pl}{jmederski@impan.pl}}

	\subjclass[2010]{35J93,35Q60,78A30}
	\date{\today}
	\keywords{Born-Infeld equation, nonlinear electromagnetism, extended charges, mean curvature operator, Lorentz-Minkowski space}
	
	\begin{abstract}
		Our motivation is to consider an electromagnetic Lagrangian density $\mathcal{L}_q$, depending on a parameter such that, for $q=1$ it corresponds to the Born-Infeld Lagrangian density and for $q=2$ it restores the Maxwell one. The model in the presence of given charge and current densities is investigated. We solve the pure magnetostatic problem for $q\in(6/5,2)$. We also study  the electrostatic problem in the presence of an assigned magnetic field for $q\in[1,2)$.
	\end{abstract}
	
	\maketitle

	\section{Introduction}\label{sec:intro}
	
	In the 1930's, almost all physicists was adopting the so called {\em dualistic} point of view in order to interpret the relation between matter and electromagnetic field: the particle are the sources of the field and interact with it, but they are not part of the field.
	This was  essentially due to the failure of any attempt to develop an unitarian theory (which, roughly speaking, states that in the nature there exists only the electromagnetic field and the particles are singularities of the field), to the results of the Relativity Theory (and in particular the dependence of the mass on the velocity, which is not characteristic of electromagnetic mass and can be derived only from the transformation law) and to the great success of the Quantum Mechanics (which starts exactly from the consideration of oscillators and particles moving in a Coulomb field).
	But this approach met some difficulties essentially due to the fact that the energy of a point charge is {\em infinite}.
	
	These considerations and the belief on the unitarian approach
	from a philosophical point of view, led Born \cite{Bnat,B} and then Born and Infeld \cite{BInat,BI} to construct a new theory of the electromagnetic field introducing, respectively, the lagrangian densities
	\[
	\mathcal{L}_{\rm B}
	=b^2 \left(1-\sqrt{1-\frac{| E |^2 - | B |^2}{b^2}}\right)
	=b^2 \left(1-\sqrt{1-\frac{|\nabla\phi+\partial_t  A |^2 - |\nabla\times  A |^2}{b^2}}\right)
	\]
	and
	\begin{align*}
		\mathcal{L}_{\rm BI}
		&=
		b^2 \left(1-\sqrt{1-\frac{| E |^2 - | B |^2}{b^2}-\frac{( E \cdot B )^2}{b^4}}\right)\\
		&=
		b^2 \left(1-\sqrt{1-\frac{|\nabla\phi+\partial_t  A |^2 - |\nabla\times  A |^2}{b^2}-\frac{[(\nabla\phi+\partial_t  A )\cdot\nabla\times  A ]^2}{b^4}}\right),
	\end{align*}
	where $b$ is a constant and $E$ and $B$ are the electric and the magnetic field in the space time $\mathbb{R}\times\mathbb{R}^3$ whose expression, through the gauge potentials $\phi$ and $A$, is
	\[
	E= -\nabla\phi - \partial_t  A ,
	\qquad
	B=\nabla\times A .
	\]
	As explained in \cite{BI}, Born and Infeld started from the following postulate:
	\begin{center}
		{\em The action integral has to be an invariant.}
	\end{center}
	The {\em action} is written usually as	\[
	\mathcal{S}=\int \mathcal{L}
	\]
	where $\mathcal{L}$ is the Lagrangian density.\\
	Now, if  $\mathcal{L}$ is a function of an arbitrary covariant tensor field $a_{kl}$, to get the required invariance, by \cite{Eddington}, $\mathcal{L}$ must be $\sqrt{\operatorname{det}(a_{kl})}$.
	Indeed, following the arguments in \cite[page 429]{BI} (see in particular the first footnote where the authors refer to \cite[\S 48 and \S 101]{Eddington}), if we consider a transformation with Jacobian $J$, since $	\operatorname{det}(\widetilde{a}_{kl})=J^{-2}\operatorname{det}(a_{kl})$, where $\widetilde{a}_{kl}$'s are the transformations of $a_{kl}$'s, then
	\[
	\int \sqrt{\operatorname{det}(\widetilde{a}_{kl})} \, d\widetilde{x}
	=
	\int \frac{1}{|J|}\sqrt{\operatorname{det}(a_{kl})} |J|\, dx
	=
	\int \sqrt{\operatorname{det}(a_{kl})}\, dx.
	\]

	Now, to consider simultaneously the metrical and the electromagnetic field, as Einstein in 1923 and 1925, Born and Infeld started from a unique tensor field $a_{kl}$, identifying its symmetrical part $g_{kl}$ as the metrical field and its antisymmetrical part $f_{kl}$ as the electromagnetic field, obtaining the following three invariant densities
	\[
	\sqrt{-\operatorname{det}(a_{kl})},
	\quad
	\sqrt{-\operatorname{det}(g_{kl})},
	\quad
	\sqrt{\operatorname{det}(f_{kl})}
	\]
	(the minus signs are due to the signature of the metric tensor), and so they took the simplest form including these three function, namely a linear combination
	\[
	\mathcal{L}=
	\sqrt{-\operatorname{det}(a_{kl})}
	+A
	\sqrt{-\operatorname{det}(g_{kl})}
	+B
	\sqrt{\operatorname{det}(f_{kl})}.
	\]
	Then, if $f_{kl}$ is the rotation of a potential vector, the last term can be omitted.
	Moreover, to have the classical Maxwell Lagrangian density in the limiting case for small values of $f_{kl}$, they took $A=-1$.\\
	As discussed above, the particular shape of $\cL_{\rm BI}$ allows Born and Infeld to get the invariance of their action for all transformations.
	
	Note that in \cite[Section 4]{BB}, Bialynicki-Birula criticized  the Born Infeld motivation stating: \guillemotleft{{\em Every function of $S[=(|E|^2-|B|^2)/2]$ and $P[=E\cdot B]$ can be easily converted into a scalar under all coordinates transformations with the use of the metric tensor}.}\guillemotright.

	If we want to approach the Born-Infeld theory from a variational point of view, the behaviour at infinity of the Lagrangian density is an obstacle. Thus we propose a new model, considering a modified version of such a Lagrangian density which is obtained {\em interpolating} by a power $q$ the Born Infeld theory with the classical Maxwell one, namely
	\begin{equation*}%\label{Lq}
		\begin{split}
			\mathcal{L}_q
			&=
			\frac{b^2}{q} \left(1-\Big(1-\frac{| E |^2 - | B |^2}{b^2}-\frac{( E \cdot B )^2}{b^4}\Big)^{q/2}\right)\\
			&=
			\frac{b^2}{q} \left(1-\Big(1-\frac{|\nabla\phi+\partial_t  A |^2 - |\nabla\times  A |^2}{b^2}-\frac{[(\nabla\phi+\partial_t  A )\cdot\nabla\times  A ]^2}{b^4}\Big)^{q/2}\right),
		\end{split}
	\end{equation*}
	where $q\in [1,2]$.
	Clearly $\cL_{1}=\cL_{BI}$, instead $\cL_2$ corresponds to the Maxwell theory with the additional term depending on $( E \cdot B )^2/b^2$ that, to recover the classical Maxwell theory, can be seen as a negligible term.

	The primary motivation for introducing the interpolated model $\mathcal{L}_q$, with $1<q<2$, is to bridge the gap between classical Maxwell theory and Born--Infeld theory, combining the main strengths of both while overcoming their respective limitations. On the one hand, classical Maxwell theory ($q=2$) enjoys good analytical properties due to its quadratic growth, but it suffers from the physically undesirable feature of infinite energy for point charges. On the other hand, Born--Infeld theory ($q=1$) resolves this divergence by yielding finite energy configurations, but its linear growth at infinity leads to serious analytical difficulties, in particular the lack of reflexivity of the associated functional spaces, which prevents the use of standard variational methods.
	
	The interpolated model inherits from Born--Infeld theory the physically relevant property of finite energy for point charges for $q\in[1,2)$, while exhibiting superlinear growth. As we shall see later, the assumption $q>6/5$ plays a crucial role here, as it restores the reflexivity of the functional framework and allows us to apply rigorous variational techniques that are unavailable in the pure Born--Infeld case.
	
	Moreover, taking into account the previous considerations, the model enjoys good invariance properties. Indeed, since we consider the Lagrangian density in the form $(\det(a_{kl}))^{q/2}$, arguing as above, we get that the associated action is invariant under transformations with $|J|=1$, including, in particular, Lorentz and Poincar\'e transformations.

	A natural question may be raised  concerning the existence of a solution representing an electrostatic field  in the presence of a fixed magnetic field $B$ or a magnetic field in the absence of an external electric field $E$. The electrostatic case for the Born-Infeld theory $\cL_1$ with $A=0$ leads to nonlinear equations and has attracted a considerable attention in the recent literature, see \cite{DenisPietroAlessio,BDPR,BIa1,BIa2} and references therein.  Observe that  the same nonlinear equations also appear in prescribed Lorentzian mean
	curvature problems, e.g. \cite{BIM,BIa2,Bartnik}.
	
	In the present paper, we consider $\cL_q$ in the electromagnetostatic case with $q\in [1,2)$.
	
	First, we are interested in finding  the electrostatic potential $\phi$
	in the presence of a fixed magnetic field $B=B(x)$. For a given {\em charge density} $\rho$, the corresponding Euler-Lagrange equation, at least formally, is
	\begin{equation}\label{eqs}
		-\div\left(\frac{[\nabla\phi+(\nabla\phi\cdot B)B]}{\big(1+|B|^2-|\nabla \phi|^2-(\nabla \phi\cdot B)^2\big)^{1-q/2}}\right)=\rho
		\quad\text{in }\R^3.
	\end{equation}
	Moreover
	\eqref{eqs} can be studied by means of the action functional
	\begin{equation}\label{eq:functionalBI}
		I_B(\phi)
		=
		\frac{1}{q} \int_{\R^3} \Big(1 - \big(1+|B|^2-|\nabla \phi|^2-(\nabla \phi\cdot B)^2\big)^{q/2}\Big) dx
		- \langle \rho, \phi\rangle.
	\end{equation}
	As we shall see in Section \ref{sec:electrostatic}, $I_B$ is well-defined on a closed and convex subset $X_B$ of $\D^{1,2}(\R^3)$. Following  \cite{DenisPietroAlessio}, where $B=0$ and $q=1$, we show that $I_B$ attains its minimum $\phi_\rho$. However it is not clear if it solves \eqref{eqs} in a suitable sense. Then, considering $B\neq 0$, we assume in addition that
	\begin{equation}\label{eq:cylassum}
		B(x_1,x_2,x_3)=\frac{b(x_1,x_2,x_3)}{\sqrt{x_1^2+x_2^2}}(-x_2,x_1,0),
	\end{equation}
	where $b:\R^3\to\R$ is {\em radially symmetric}, i.e.  invariant with respect to the orthogonal group action $\cO(3)$. 
	
	If we denote by $X_B^*$ the {\em dual} of $X_B$, our first main result reads as follows.
	
	\begin{Th}\label{thm:radial}
		Let $b\in L^2(\R^3)\cap L^\infty(\R^3)$ be radially symmetric, $\rho\in X_B^*$, $\rho\neq 0$ be a radial distribution of charge, and $q\in [1,2)$. Then there is a unique and nontrivial minimizer $\phi_\rho$ of $I_B$, which is a weak and radial solution to the electrostatic problem \eqref{eqs}.
	\end{Th}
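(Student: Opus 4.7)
The core structural observation I would exploit is that, under \eqref{eq:cylassum}, the magnetic field $B$ is azimuthal with $|B(x)|=|b(|x|)|$ already radial, and for any radial $\phi$ the gradient is parallel to $x$ while $B$ is perpendicular to $x$. Consequently, $\nabla\phi\cdot B\equiv 0$ for every radial $\phi$, and on the set
$$X_B^{\mathrm{rad}}:=\{\phi\in X_B:\phi\text{ is radial}\}$$
the functional \eqref{eq:functionalBI} simplifies to
$$I_B(\phi)=\frac{1}{q}\int_{\R^3}\!\bigl(1-(1+b(|x|)^2-|\nabla\phi|^2)^{q/2}\bigr)\,dx-\langle\rho,\phi\rangle.$$
This reduction is the main lever: the loss of $\cO(3)$-invariance caused by $B$ is effectively annihilated on the radial subspace, while $|B|^2$ enters only as a (radial) profile in the reduced Lagrangian.

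The existence and uniqueness parts would then follow from the direct method on the closed convex subset $X_B^{\mathrm{rad}}$ of the reflexive space $\D^{1,2}_{\mathrm{rad}}(\R^3)$. Writing the integrand as $h(g(\nabla\phi))$ with $g(\xi)=|\xi|^2+(\xi\cdot B)^2$ convex quadratic and $h(s)=\tfrac{1}{q}(1-(1+|B|^2-s)^{q/2})$ strictly increasing and strictly convex on $[0,1+|B|^2)$ for $q\in[1,2)$, one obtains strict convexity of $I_B$. The elementary bound $1-(1-t)^{q/2}\ge \tfrac{q}{2}t$ on $[0,1]$ yields
$$I_B(\phi)\ge \tfrac12\|\nabla\phi\|_2^2-\|\rho\|_{X_B^*}\|\nabla\phi\|_2-C(B),$$
which gives coercivity; weak lower semicontinuity follows from convexity. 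A minimizing sequence thus produces a unique minimizer $\phi_\rho\in X_B^{\mathrm{rad}}$, and a symmetric criticality argument (averaging any minimizer on $X_B$ over the $\cO(3)$-action, which fixes $I_B$ by radiality of $\rho$ and of the reduced Lagrangian, and appealing to strict convexity) shows $\phi_\rho$ is the unique minimizer on the whole of $X_B$.

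For non-triviality, I would select a radial test function $\varphi\in X_B^{\mathrm{rad}}$ with $\langle\rho,\varphi\rangle>0$, which exists since $\rho\neq 0$. An expansion of the Lagrangian at $\nabla\phi=0$ gives
$$I_B(t\varphi)-I_B(0)=O(t^2)-t\,\langle\rho,\varphi\rangle<0$$
for $t>0$ small, so $\phi_\rho\neq 0$.

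The main obstacle is the final step, namely proving that the minimizer $\phi_\rho$ is a weak solution of \eqref{eqs}, since the pointwise constraint $|\nabla\phi|^2+(\nabla\phi\cdot B)^2\le 1+|B|^2$ defining $X_B$ could in principle be active on a positive-measure set. This is the delicate analytical issue typical of Born--Infeld-type problems. To resolve it, I would follow the scheme of \cite{DenisPietroAlessio}: perform admissible perturbations $\phi_\rho+t\psi$ with $\psi$ smooth, radial, and compactly supported, obtain a one-sided variational inequality, and combine it with a truncation/density argument, exploiting the fact that in the radial reduction the constraint reads $|\phi_\rho'(r)|^2\le 1+b(r)^2$, a one-dimensional pointwise bound that is more tractable than the full anisotropic one. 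The strict convexity provided by $q\in[1,2)$ and the reflexivity of the ambient space then allow one to pass from the inequality to equality, and differentiating $t\mapsto I_B(\phi_\rho+t\psi)$ at $t=0$ produces exactly the weak formulation of \eqref{eqs}.
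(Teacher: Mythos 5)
Your treatment of existence, uniqueness, coercivity, strict convexity, nontriviality and radiality is essentially the paper's argument (the paper minimizes over all of $X_B$ and deduces radiality of $\phi_\rho$ from uniqueness and the $\cO(3)$-invariance of $I_B$, rather than minimizing over the radial cone and then symmetrizing, but these routes are interchangeable). The genuine gap is in the last and hardest step: proving that the minimizer is a weak solution in the sense of Definition \ref{def_ws}. The claim that ``strict convexity and reflexivity then allow one to pass from the inequality to equality'' is not the mechanism that works: convexity and reflexivity hold for arbitrary $B$ and general $\rho$, and precisely in that generality the paper can only establish the variational inequality \eqref{eq:ineqcc} (Proposition \ref{pr:sol-deb}) and explicitly leaves open whether the minimizer solves \eqref{eqs}. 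What actually closes the argument is a Serra--Tilli type truncation in the radial variable, following \cite{ST,DenisPietroAlessio}: one first shows, via the $\psi=0$ variation in Proposition \ref{pr:sol-deb}, that the degenerate set where $1+|b|^2-|\phi_\rho'|^2=0$ has measure zero and that the flux density $|\nabla\phi_\rho|^2\big(1+|B|^2-|\nabla\phi_\rho|^2\big)^{q/2-1}$ lies in $L^1(\R^3)$; then, for a radial $\psi\in\cC_0^\infty$, one replaces it by $\psi_k$ with $\psi_k'=\psi'\,(1-\chi_{E_k})$, where $E_k=\{\tau:\,|\phi_\rho'(\tau)|^2-|b(\tau)|^2\ge 1-1/k\}$, so that $\phi_\rho+t\psi_k$ remains admissible for small $|t|$ and the two-sided derivative of $t\mapsto I_B(\phi_\rho+t\psi_k)$ vanishes at $t=0$; finally one lets $k\to\infty$ using $\big|\bigcap_k E_k\big|=0$ and dominated convergence, and a mollification argument extends the identity to all radial $\psi\in X_B$. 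Your sketch correctly names the difficulty (the constraint may be active) but does not supply this device, which is the core of the proof.

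A second omission: even granting the Euler--Lagrange identity against radial test functions, Definition \ref{def_ws} demands it for every $\psi\in X_B$, not only radial ones. The paper obtains this by testing the radial identity with $\phi_\rho$ itself, which upgrades \eqref{eq:inqrhopsi} to an equality, and then inserting $\psi$ and $-\psi$ into the variational inequality \eqref{eq:ineqcc} to squeeze out the identity for arbitrary $\psi\in X_B$. Your plan stops at radial test functions, so as written it proves a strictly weaker statement than the theorem.
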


	Note that the magnetic field $B\neq 0$ of the form \eqref{eq:cylassum} is not $\cO(3)$-equivariant. Indeed, if $B$ is $\cO(3)$-equivariant and $B=\curlop A$ for some $A\in W^{1,1}_{loc}(\R^3,\R^3)$, then $B=0$ (see Proposition \ref{prop:B0}).
	Recall that Theorem \ref{thm:radial} for $B=0$ and $q=1$ has been obtained  in \cite{DenisPietroAlessio}.
	
	Next we are interested in finding magnetic potential $A$ when $\phi=0$. This leads to
	the following equation
	\begin{equation}\label{eq}
		\curlop\left(\displaystyle\frac{\curlop A}{\big(1+|\curlop A|^2\big)^{1-q/2}}\right)=J 
		\quad\text{in }\R^3,
	\end{equation}
	where $J$ is an {\em external current density}.
	
	If $J=0$ and $q=1$, Yang \cite{Yang} showed that there are no nontrivial solutions to \eqref{eq}, and the natural open question arose concerning  nontrivial solutions in the presence of nontrivial current $J$. We answer to this problem for $\cL_q$ with $q\in (6/5,2)$. 
	
	When $q=2$ and $J$ depends nonlinearly on $A$ then \eqref{eq}  was recently investigated e.g. in \cite{BenForAzzAprile,BenFor,MedSz,Mederski}, where \eqref{eq}  with $q=2$ was motivated by the search of  the exact propagation of electromagnetic waves in nonlinear media arising in optics. 
	Furthermore, a similar (and simpler) nonlinear operator as in \eqref{eq} in the scalar case (and so involving the divergence and the gradient instead the $\curlop$ operator) with a nonlinear right hand side has been studied in \cite{ADP}. To the best of our knowledge we present the first analytical study of the existence of solutions to \eqref{eq} with the fixed nontrivial external current source $J$. As we shall see in Theorem \ref{th:symmetry} we cannot expect radial solutions $A$, i.e. $\cO(3)$-equivariant.

	In order to treat \eqref{eq} variationally we must take into account the first difficulty concerning the operator 
	$$A\mapsto\curlop\left(\frac{\curlop A}{\big(1+|\curlop A|^2\big)^{1-q/2}}\right),$$
	which disappears on the space of gradient fields $A=\nabla\phi$. Then the natural energy functional given by
	\begin{equation*}
		\cJ(A):=\frac{1}{q}\int_{\R^3} \big[\big(1+|\curlop A|^2)^{q/2}-1\big]\, dx- \langle J,A\rangle
	\end{equation*}
	is trivial on the gradient fields $A=\nabla\phi$, $\phi\in\cC_0^{\infty}(\R^3)$ and, for suitable $J$'s, $\langle J,A\rangle=0$ since $\div J=0$.
	Therefore, due to the gauge invariance of \eqref{eq} it is natural to look for divergence-free solutions, i.e. $\div A=0$. 
	
	In Section \ref{sec:magnetic} we define a Banach space $\cA$ of divergence-free vector field  in which $\cJ$ is well defined and
	we obtain the following result.
	\begin{Th}\label{th:mainBI}
		Let $J\in\cA^*$, $J\neq 0$ and $q\in (6/5,2)$. Then
		there is a nontrivial and cylindrically symmetric (weak) solution to \eqref{eq} of the form 
		$$A(x_1,x_2,x_3)=\frac{u(r,x_3)}{r}(-x_2,x_1,0)\hbox{ with }u:(0,\infty)\times\R\to\R,\; r=\sqrt{x_1^2+x_2^2}$$
		such that $A$ is the global minimizer of $\J$ in $\A$.
	\end{Th}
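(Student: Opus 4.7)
The plan is to apply the direct method of the calculus of variations to $\J$ on the Banach space $\A$ introduced in Section~\ref{sec:magnetic}, which one takes to be the space of divergence-free, cylindrically symmetric vector fields of the prescribed form. The integrand $F(v):=\tfrac{1}{q}\bigl[(1+|v|^2)^{q/2}-1\bigr]$ is strictly convex on $\R^3$ for every $q\geq 1$ and satisfies the two-sided estimate
\[
c\,\min(|v|^2,|v|^q)\leq F(v)\leq C\,\max(|v|^2,|v|^q).
\]
Consequently the principal part of $\J$ is continuous, strictly convex, and weakly lower semicontinuous on $\A$, while the linear term $A\mapsto\langle J,A\rangle$ is by assumption continuous on $\A$.

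The crux of the proof, and the step I expect to be most technical, is the coercivity of $\J$ on $\A$. Using $\div A=0$ and a Sobolev-type inequality for divergence-free fields, $\|\curlop A\|_{L^q}$ controls $\|A\|_{L^{q^\ast}}$ with $q^\ast=3q/(3-q)$. The hypothesis $q>6/5$ is precisely what forces $q^\ast>2$, ensuring reflexivity of $\A$ and allowing the superlinear principal part of $\J$ to dominate the linear perturbation $\langle J,A\rangle$ as $\|A\|_{\A}\to\infty$; both features break down in the pure Born--Infeld regime $q=1$, in agreement with the Yang non-existence result for $J=0$. Granted coercivity and weak lower semicontinuity, any minimizing sequence is bounded, admits a weakly convergent subsequence $A_n\rightharpoonup A^\ast\in\A$ by reflexivity, and $\J(A^\ast)=\inf_\A\J$; strict convexity of $F$ forces $A^\ast$ to be unique, and by the very definition of $\A$ it has the required cylindrically symmetric form.

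To see that $A^\ast$ is nontrivial we use that $J\neq 0$: pick $\psi\in\A$ with $\langle J,\psi\rangle>0$; the expansion
\[
\J(t\psi)=\tfrac{t^2}{2}\|\curlop\psi\|_{L^2}^2-t\langle J,\psi\rangle+o(t^2)<0
\]
for small $t>0$ yields $\J(A^\ast)\leq\J(t\psi)<0=\J(0)$, hence $A^\ast\neq 0$. Finally, since $A^\ast$ minimizes the smooth convex functional $\J$ over the linear space $\A$, its Gateaux derivative vanishes, giving
\[
\int_{\R^3}\frac{\curlop A^\ast\cdot\curlop\psi}{(1+|\curlop A^\ast|^2)^{1-q/2}}\,dx=\langle J,\psi\rangle
\]
for every $\psi\in\A$, which is the weak formulation of \eqref{eq} in the admissible class. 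Differentiation under the integral is justified by the two-sided growth bound on $F$ combined with the Sobolev embedding of $\A$.
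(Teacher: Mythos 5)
Your direct-method skeleton (strict convexity, coercivity, reflexivity, weak lower semicontinuity, nontriviality from $J\neq 0$) matches the paper's route up to the existence of a unique, nontrivial, cylindrically symmetric global minimizer $A_0\in\cA$. The genuine gap is the final step. Setting the G\^ateaux derivative equal to zero over the linear space $\cA$ only yields
\[
\int_{\R^3}\frac{\curlop A_0\cdot\curlop\psi}{\big(1+|\curlop A_0|^2\big)^{1-q/2}}\,dx=\langle J,\psi\rangle
\qquad\text{for all }\psi\in\cA,
\]
i.e.\ the Euler--Lagrange identity tested only against $G$- and $S$-invariant, purely tangential (hence divergence-free) fields. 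That is strictly weaker than being a weak solution of \eqref{eq} in the sense of Definition \ref{defws}, which requires the identity for \emph{every} $B\in\cC_0^{\infty}(\R^3,\R^3)$ --- and this is what the theorem asserts. Arbitrary smooth test fields are neither symmetric nor divergence-free, and nothing in your argument excludes a nonzero derivative of $\cJ$ at $A_0$ in those directions. The paper closes this gap by noting that $\cA$ is the fixed-point set in $\cD$ of the isometric $G$-action \eqref{actionG} and of the isometric involution $S$ in \eqref{actionS}, that $\cJ$ is invariant under both, and then invoking Palais' principle of symmetric criticality: criticality on $\cA$ upgrades to criticality on all of $\cD\supset\cC_0^{\infty}(\R^3,\R^3)$. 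Without this (or an equivalent argument averaging test fields over the symmetries, disposing of the $\rho$- and $\zeta$-components via \eqref{curlort}, and using $\div J=0$ to kill gradient directions), the claim that $A_0$ solves \eqref{eq} weakly is not established.

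Two smaller points. The role of $q>6/5$ is misattributed: reflexivity of $\cL$ (hence of $\cA$) needs only $q,q^*>1$; the restriction $q>6/5$ actually enters through Proposition \ref{prop:decomposition} and the equivalent-norm lemma, where the exponent $(5q-6)/(3q)$ in \eqref{eq:etaestim} must be positive to control the decomposition near the symmetry axis $\Sigma$. Also, in your nontriviality step $|\curlop\psi|_2$ may be infinite for a general $\psi\in\cA$ (one only knows $\curlop\psi\in L^2(\R^3,\R^3)+L^q(\R^3,\R^3)$), so the quadratic expansion should be replaced by a bound of the type $\cJ(t\psi)\leq C\big(t^2+t^q\big)-t\langle J,\psi\rangle$, which still gives $\cJ(t\psi)<0$ for small $t>0$ since $q>1$; the conclusion $A_0\neq 0$ is unaffected.
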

	
	The construction of space $\cA$ and the use of variational approach requires $q>6/5$ (see Section \ref{sec:magnetic}). The problem for $q\in [1,6/5]$ remains open.\\
	Observe that, if $q=2$, both in \eqref{eqs} and \eqref{eq} we recover {\em classical} operators and standard methods can be applied.
	
	From now on we assume that $q\in [1,2]$. In what follows,  $| \cdot |_k$ denotes the $L^k$-norm  for $k\in [1,+\infty]$.
	Moreover, with $C,C_i$ we denote positive constants that can vary also from line to line.

	\section{Electrostatic case}\label{sec:electrostatic}
	
	In this section we will study the electrostatic case in presence of a magnetic field $B=B(x)$ for $q\in[1,2)$.
	
	Let us fix the magnetic field $B\in L^2(\R^3,\R^3)\cap L^\infty(\R^3,\R^3)$
	and consider the set
	\begin{equation*}
		X_B
		:=
		\cD^{1,2}(\R^3)
		\cap
		\big\{\phi\in \cC^{0,1}(\R^3) :\;
		|\nabla \phi|^2 +(\nabla \phi\cdot B)^2\le 1+|B|^2\hbox{ a.e. in }\R^3\big\}
	\end{equation*}
	equipped with the norm
	\begin{equation*}
		\|\phi\|_{X_B}:=\left(\int_{\R^3}|\nabla\phi|^2 \ dx\right)^{1/2},
	\end{equation*}
	where $\cD^{1,2}(\R^3)$ is the completion of $\cC_0^\infty (\R^3)$ with respect to $\|\cdot\|_{X_B}$.\\
	Observe that $X_B$ is a convex subset of $\cD^{1,2}(\R^3)$. Moreover, we have
	\begin{Lem} $X_B$ is weakly closed subset of $\cD^{1,2}(\R^3)$.
	\end{Lem}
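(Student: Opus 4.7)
The plan is to reduce the weak closedness of $X_B$ to its strong closedness in $\cD^{1,2}(\R^3)$ via Mazur's theorem, which for convex subsets of a Banach space identifies the weak and norm closures. The set $X_B$ is convex because the pointwise constraint $|\nabla\phi|^2+(\nabla\phi\cdot B)^2\le 1+|B|^2$ has the form $\nabla\phi^\top(I+B\otimes B)\nabla\phi\le 1+|B|^2$ with $I+B\otimes B$ positive semi-definite, hence the left-hand side is convex in $\nabla\phi$; the intersection with the convex set $\cD^{1,2}(\R^3)$ is convex as well. Thus I only need to verify that $X_B$ is closed in the norm topology of $\cD^{1,2}(\R^3)$.

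For strong closedness I would start from a sequence $(\phi_n)\subset X_B$ with $\phi_n\to\phi$ in $\cD^{1,2}(\R^3)$, so that $\nabla\phi_n\to\nabla\phi$ in $L^2(\R^3,\R^3)$. After passing to a subsequence, $\nabla\phi_n(x)\to\nabla\phi(x)$ for a.e.\ $x\in\R^3$. Because the map $\xi\mapsto|\xi|^2+(\xi\cdot B(x))^2$ is continuous, the pointwise constraint defining $X_B$ is preserved along this subsequence, which yields $|\nabla\phi|^2+(\nabla\phi\cdot B)^2\le 1+|B|^2$ a.e.\ in $\R^3$.

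The step that really needs care is showing that the limit $\phi$ admits a representative in $\cC^{0,1}(\R^3)$, since membership in $X_B$ demands actual Lipschitz regularity and not merely an almost-everywhere bound on the weak gradient. From the inequality just obtained together with $B\in L^\infty(\R^3,\R^3)$ one gets $|\nabla\phi|\le\sqrt{1+|B|_\infty^2}$ a.e.; hence the distributional gradient of $\phi$ lies in $L^\infty(\R^3,\R^3)$, and combined with $\phi\in\cD^{1,2}(\R^3)$ the standard Sobolev--Lipschitz characterisation provides a globally Lipschitz representative, with Lipschitz constant $\sqrt{1+|B|_\infty^2}$. With this representative in hand one concludes $\phi\in X_B$, which establishes strong closedness; Mazur's theorem then gives the desired weak closedness and closes the argument.
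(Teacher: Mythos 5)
Your proposal is correct, and its skeleton is the same as the paper's: convexity of $X_B$ reduces weak closedness to norm closedness (Mazur), which is then checked on sequences. The difference lies in how the limit is shown to belong to $X_B$. The paper argues on the functions themselves: each $\phi_n$ is Lipschitz with a constant controlled uniformly by $1+|B|_\infty$ (because of the constraint), the uniform gradient bound plus boundedness in $\cD^{1,2}(\R^3)$ gives an $L^\infty$ bound and (up to a subsequence) convergence of $\phi_n$ to $\phi$ strong enough to pass the Lipschitz estimate to the limit, and the pointwise gradient constraint for $\phi$ is left implicit. You argue on the gradients: strong convergence in $L^2$ gives a.e. convergence of $\nabla\phi_n$ along a subsequence, which preserves the constraint $|\nabla\phi|^2+(\nabla\phi\cdot B)^2\le 1+|B|^2$ a.e.; this yields $|\nabla\phi|\le\sqrt{1+|B|_\infty^2}$ a.e., and since $\phi\in\cD^{1,2}(\R^3)\subset L^1_{loc}(\R^3)$ has distributional gradient in $L^\infty$, the standard identification of $W^{1,\infty}$ with $\cC^{0,1}$ on the convex domain $\R^3$ provides a globally Lipschitz representative. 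Your route has the merit of verifying the defining inequality of $X_B$ explicitly (in the paper this is only implicit) and of avoiding the uniform $L^\infty$-convergence step, at the price of invoking the $W^{1,\infty}$--Lipschitz characterisation; both arguments are complete and correct.
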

	\begin{proof}
		Let us take a sequence $\{ \phi_n \}\subset X_B$ such that $\phi_n\to \phi$ in $\cD^{1,2}(\R^3)$. Clearly  $\phi_n\to \phi$ a.e. on $\R^3$ passing to a subsequence, and since $\phi_n$'s are Lipschitz continuous, for every $n\in\N$ there is a constant $c_n>0$ such that
		$$|\phi_n(x)-\phi_n(y)|\leq c_n |x-y|,\quad\hbox{for }x,y\in\R^3.$$
		Moreover, since $\{ \phi_n \}\subset X_B$, then
		$|\nabla\phi_n|_\infty\leq 1+|B|_\infty$ and so we may assume that $\{c_n\}$ is bounded.
		Now, the boundedness of $\{\nabla\phi_n\}$ in $L^2(\R^3,\R^3)$ and in $L^\infty(\R^3,\R^3)$ implies, by the Sobolev embeddings, that $\{\phi_n\}$ is bounded in $L^\infty(\R^3)$ and passing to a subsequence 
		$\phi_n\to \phi$ in $L^\infty(\R^3)$. Therefore,  for some constant $c>0$,
		$$|\phi(x)-\phi(y)|\leq c |x-y|,\quad\hbox{for }x,y\in\R^3.$$
		Then we conclude that $\phi\in X_B$, and $X_B$ is closed.  Hence $X_B$ is weakly closed, since it convex.
	\end{proof}

	The following fundamental inequalities hold.
	
	\begin{Lem}
		Let $\phi\in X_B$. For a.e. $x\in\R^3$
		\begin{equation}\label{fundest}
			1-|\nabla \phi(x)|^2
			\leq 
			\big(1+|B(x)|^2-|\nabla \phi(x)|^2-(\nabla \phi(x)\cdot B(x))^2\big)^{q/2}
			\leq
			1+\frac{q}{2}(|B(x)|^2-|\nabla \phi(x)|^2).
		\end{equation}
	\end{Lem}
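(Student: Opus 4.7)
The plan is to establish both inequalities separately by reducing them to elementary one-variable estimates for the power function $t\mapsto t^{q/2}$ with $q/2\in[1/2,1]$. Fix $x\in\R^3$ (at a point where $\nabla\phi(x)$ exists) and, to unclutter the notation, set $a:=|\nabla\phi(x)|^2$, $\be:=|B(x)|^2$, and $\ga:=(\nabla\phi(x)\cdot B(x))^2$. The definition of $X_B$ gives $a+\ga\le 1+\be$, hence the quantity $T:=1+\be-a-\ga$ is nonnegative, so $T^{q/2}$ is well defined.

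For the lower bound, I would first exploit the Cauchy--Schwarz inequality in the form $\ga\le a\be$ to factor
\[
T\;\ge\;1+\be-a-a\be\;=\;(1-a)(1+\be).
\]
If $a\ge 1$, the lower bound is trivial since $1-a\le 0\le T^{q/2}$. If $a<1$, both factors on the right are nonnegative, so raising to the power $q/2$ is monotone. Using that $(1+\be)^{q/2}\ge 1$ and that, for $t\in[0,1]$ and $q/2\le 1$, one has $t^{q/2}\ge t$, I obtain
\[
T^{q/2}\;\ge\;(1-a)^{q/2}(1+\be)^{q/2}\;\ge\;(1-a)^{q/2}\;\ge\;1-a,
\]
which is the desired estimate.

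For the upper bound, the key observation is that the map $s\mapsto(1+s)^{q/2}$ is concave on $[-1,+\infty)$ for $q\in[1,2]$ (its second derivative equals $(q/2)(q/2-1)(1+s)^{q/2-2}\le 0$), so the tangent-line inequality at $s=0$ yields
\[
(1+s)^{q/2}\;\le\;1+\frac{q}{2}s,\qquad s\ge-1.
\]
Applying this with $s=\be-a-\ga\ge -1$ and then discarding the nonpositive contribution $-\tfrac{q}{2}\ga$ gives
\[
T^{q/2}\;\le\;1+\frac{q}{2}(\be-a-\ga)\;\le\;1+\frac{q}{2}(\be-a),
\]
which is exactly the claimed inequality.

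No serious obstacle is expected. The only mild subtlety is the sign dichotomy $a\lessgtr 1$ in the lower bound, because $(1-a)^{q/2}$ is real only when $1-a\ge 0$; this is handled by the trivial case above. Both pieces rely solely on $q\in[1,2]$ (so that $q/2\in[1/2,1]$ and the concavity/subadditivity properties of $t^{q/2}$ are available) and on the admissibility constraint built into $X_B$.
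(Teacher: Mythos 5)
Your proof is correct and follows essentially the same route as the paper: for the lower bound, the Cauchy--Schwarz estimate $(\nabla\phi\cdot B)^2\le|\nabla\phi|^2|B|^2$, the factorization $(1-|\nabla\phi|^2)(1+|B|^2)$, the case split on $|\nabla\phi|\gtrless 1$, and the elementary power comparisons; for the upper bound, discarding the $(\nabla\phi\cdot B)^2$ term and the tangent-line (concavity) inequality for $s\mapsto(1+s)^{q/2}$, which is exactly the paper's observation that $t\mapsto(1+t)^q-(1+qt/2)^2$ is nonpositive on $[-1,+\infty)$.
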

	\begin{proof}
		Let us start proving the left inequality.\\
		If $|\nabla\phi(x)|\geq 1$, then
		\[
		\big(1+|B(x)|^2-|\nabla \phi(x)|^2-(\nabla \phi(x)\cdot B(x))^2\big)^{q/2}
		\geq 0
		\geq
		1-|\nabla \phi(x)|^2.
		\]
		If, instead, $|\nabla\phi(x)|< 1$, then
		\[
		(1-|\nabla \phi(x)|^2)^{1-q/2}
		< 1
		\leq 
		(1+|B(x)|^2)^{q/2}
		\]
		and so
		\[
		1-|\nabla \phi(x)|^2
		<
		(1+|B(x)|^2)^{q/2}(1-|\nabla \phi(x)|^2)^{q/2}.
		\]
		Thus
		\[
		1+|B(x)|^2-|\nabla \phi(x)|^2-|\nabla \phi(x)|^2 |B(x)|^2
		=\big(1+|B(x)|^2\big) \big(1-|\nabla \phi(x)|^2\big)
		>0
		\]
		and
		\begin{align*}
			\big(1+|B(x)|^2-|\nabla \phi(x)|^2-(\nabla \phi(x)\cdot B(x))^2\big)^{q/2}
			&\geq
			\big(1+|B(x)|^2-|\nabla \phi(x)|^2-|\nabla \phi(x)|^2 |B(x)|^2\big)^{q/2}\\
			&=\big(1+|B(x)|^2\big)^{q/2}
			\big(1-|\nabla \phi(x)|^2\big)^{q/2}\\
			&> 1-|\nabla \phi(x)|^2.
		\end{align*}
		Now let us prove the second inequality.\\
		Observe that, by the definition of $X_B$,
		\[
		1+\frac{q}{2}(|B(x)|^2-|\nabla \phi(x)|^2)
		\geq
		1+\frac{q}{2}((\nabla\phi (x)\cdot B(x))^2-1)
		>
		0
		\]
		and
		\[
		1+|B(x)|^2-|\nabla \phi(x)|^2
		\geq
		0.
		\]
		Then
		\begin{align*}
			\big(1+|B(x)|^2-|\nabla \phi(x)|^2-(\nabla \phi(x)\cdot B(x))^2\big)^{q}
			&\leq
			\big(1+|B(x)|^2-|\nabla \phi(x)|^2\big)^{q}\\
			&\leq
			\Big[1+\frac{q}{2}(|B(x)|^2-|\nabla \phi(x)|^2)\Big]^{2}
		\end{align*}
		where the last inequality follows observing that the function $t\in [-1,+\infty)\mapsto (1+t)^q-(1+qt/2)^2$ is nonpositive.
	\end{proof}
	
	Let $\rho\in X^*_B$. Then  $\langle\ , \ \rangle$ in \eqref{eq:functionalBI} denotes the duality pairing between $X_B^*$ and $X_B$.	
	As an immediate consequence of the previous Lemma, we have that the functional $I_B:X_B\to\R$ is well-defined in $X_B$.
	
	Moreover if $\phi,\psi\in X_B$ and $|\nabla \phi|_\infty<1$ and $|\nabla \psi|_\infty<1$, then $I_B'(\phi)[\psi]$ exists and
	\[
	I_B'(\phi)[\psi]
	=
	\int_{\R^3} \frac{[\nabla\phi+(\nabla\phi\cdot B)B]\cdot\nabla\psi}{\big(1+|B|^2-|\nabla \phi|^2-(\nabla \phi\cdot B)^2\big)^{1-q/2}}\,dx
	- \langle \rho,\psi\rangle.
	\]
	
	Observe that, in such a case, for a.e. $x\in\R^3$,
	\[
	1+|B(x)|^2-|\nabla \phi(x)|^2-(\nabla \phi(x)\cdot B(x))^2
	\geq(1+|B(x)|^2)(1-|\nabla \phi|_\infty^2)
	\geq
	1-|\nabla \phi|_\infty^2>0
	\]
	and
	\begin{equation}\label{finint}
		\left|\frac{[\nabla\phi+(\nabla\phi\cdot B)B]\cdot\nabla\psi}{\big(1+|B|^2-|\nabla \phi|^2-(\nabla \phi\cdot B)^2\big)^{1-q/2}}\right|
		\leq
		\frac{(1+|B|_\infty^2)^{q/2}}{(1-|\nabla \phi|_\infty^2)^{1-q/2}}|\nabla \phi||\nabla\psi| \in L^1(\R^3).
	\end{equation}

	Therefore we give the following definition.
	\begin{Def}\label{def_ws}
		A {\em weak solution} of the electrostatic problem \eqref{eqs} is a function $\phi_\rho\in X_B$ such that for all $\psi \in X_B$, we have
		\[
		\int_{\R^3} \frac{[\nabla\phi_\rho+(\nabla\phi_\rho\cdot B)B]\cdot\nabla\psi}{\big(1+|B|^2-|\nabla \phi_\rho|^2-(\nabla \phi_\rho\cdot B)^2\big)^{1-q/2}}\,dx= \langle \rho,\psi\rangle.
		\]
	\end{Def}
	
	Thus, at least formally, critical points of $I_B$ in $X_B$ are solutions of \eqref{eqs}.

	\begin{Prop}\label{PropIB}
		The functional $I_B$ is bounded from below, coercive, continuous, strictly convex, and weakly lower semi-continuous.
	\end{Prop}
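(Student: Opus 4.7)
The plan is to verify the five properties in turn, using the bracketing inequalities of \eqref{fundest} as the main tool. Integrating the right-hand inequality of \eqref{fundest} and using $B\in L^2(\R^3,\R^3)$ directly yields
\[
I_B(\phi)\ \ge\ \tfrac12\|\phi\|_{X_B}^2-\tfrac12|B|_2^2-\|\rho\|_{X_B^*}\|\phi\|_{X_B},
\]
a quadratic--linear lower bound that is bounded below and coercive.

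For \emph{strict convexity} --- the delicate step --- I would discard the linear term and focus on the pointwise density $g(v):=1-(1+|B|^2-v^{\top}Mv)^{q/2}$ with $M:=\mathrm{Id}+BB^{\top}$ pointwise positive definite, and factor it as $g=\psi\circ Q$ with $Q(v):=v^{\top}Mv$ strictly convex and $\psi(t):=1-(1+|B|^2-t)^{q/2}$ on $[0,1+|B|^2]$. A direct computation gives $\psi'>0$ and $\psi''=\tfrac{q}{2}(1-\tfrac{q}{2})(1+|B|^2-t)^{q/2-2}>0$ on the open interval, so $\psi$ is strictly increasing and strictly convex there; composition with the strictly convex $Q$ then yields pointwise strict convexity of $g$, which transfers to the integral since $\phi_0\neq\phi_1$ in $X_B$ forces $\nabla\phi_0\neq\nabla\phi_1$ on a set of positive measure. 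A subtlety --- and the main obstacle I expect --- is that the admissible constraint $v^{\top}Mv\le 1+|B|^2$ allows $Q$ to saturate the endpoint $1+|B|^2$, where $\psi'$ and $\psi''$ blow up; this is handled by observing that strict convexity of $Q$ forces $Q(v_t)$ into the interior $[0,1+|B|^2)$ even when both endpoints lie on the boundary, or, equivalently, by verifying strict convexity of $\psi$ up to the endpoint by hand via $\psi(ta+(1-t)(1+|B|^2))=1-t^{q/2}(1+|B|^2-a)^{q/2}$ together with $t^{q/2}>t$ for $q\in[1,2)$ and $t\in(0,1)$.

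For \emph{continuity and weak lower semi-continuity}, given $\phi_n\to\phi$ in $X_B$ the linear term converges by definition of $\rho\in X_B^*$. For the integral part, pass to a subsequence along which $\nabla\phi_n\to\nabla\phi$ almost everywhere; combining both inequalities of \eqref{fundest} gives
\[
\Bigl|1-(1+|B|^2-|\nabla\phi_n|^2-(\nabla\phi_n\cdot B)^2)^{q/2}\Bigr|\ \le\ |\nabla\phi_n|^2+\tfrac{q}{2}|B|^2,
\]
whose right-hand side converges in $L^1(\R^3)$ because $\nabla\phi_n\to\nabla\phi$ in $L^2$. A generalized dominated convergence theorem (with $L^1$-converging majorants) then yields convergence of the integral along the subsequence, and the usual subsequence principle removes the restriction. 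Finally, since $I_B$ is convex and strongly continuous on the convex subset $X_B$ of the Banach space $\cD^{1,2}(\R^3)$, Mazur's lemma yields weak lower semi-continuity.
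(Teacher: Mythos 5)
Your proof is correct and follows essentially the same route as the paper: coercivity and boundedness from below via the right-hand inequality in \eqref{fundest}, continuity by (generalized) dominated convergence after passing to an a.e.\ convergent subsequence, strict convexity by composing the strictly convex quadratic form $v\mapsto |v|^2+(v\cdot B)^2$ with the monotone power $q/2$, and weak lower semicontinuity from convexity plus strong continuity on the weakly closed convex set $X_B$. Your explicit treatment of the boundary case $|\nabla\phi|^2+(\nabla\phi\cdot B)^2=1+|B|^2$ in the convexity argument is a welcome extra detail that the paper leaves implicit, but it does not change the method.
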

	\begin{proof}
		The boundedness from below and the coercivity are immediate consequences of \eqref{fundest}.\\
		The continuity can be obtained observing that, if the sequence $\{\phi_n\}\subset X_B$ converges to $\phi$ in $X_B$, then, up to a subsequence, $\nabla\phi_n \to \nabla\phi$ a.e. in $\R^3$ and there exists $w\in L^1(\R^3)$ such that $|\nabla\phi_n|^2,|\nabla\phi|^2\leq w$ a.e. in $\R^3$. Thus, by \eqref{fundest},
		\begin{align*}
			&\left|\Big[1-\big(1+|B|^2-|\nabla \phi_n|^2-(\nabla \phi_n\cdot B^2\big)^{q/2}\Big]
			-\Big[1-\big(1+|B|^2-|\nabla \phi|^2-(\nabla \phi\cdot B^2\big)^{q/2}\Big]\right|\\
			&\quad \leq
			\max\left\{ \frac{q}{2} \big| |\nabla \phi_n|^2 - |B|^2 \big|, |\nabla \phi_n|^2\right\}
			+ \max\left\{ \frac{q}{2} \big| |\nabla \phi|^2 - |B|^2 \big|, |\nabla \phi|^2\right\}\\
			& \leq
			2\max\left\{ \frac{q}{2} (w+ |B|^2), w\right\} \in L^1(\R^3)
		\end{align*}
		and then, by Lebesgue Dominated Convergence Theorem we can conclude. It is straightforward to check that the convergence holds for the whole sequence.\\
		About the strict convexity we observe that the function
		\[
		X=(X_1,X_2,X_3)\in\R^3\mapsto 1+|B(x)|^2-|X|^2-(X\cdot B(x))^2
		\]
		is strictly concave. Then its composition with the power function $q/2$, which is also strictly concave being $q\in[1,2)$, allows us to conclude.\\
		Finally, the weakly lower semi-continuity is a consequence of the continuity and the strict convexity of $I_B$.
	\end{proof}

	Thus we obtain the following result.
	\begin{Th}
		There exists a unique  minimiser $\phi_\rho$ of $I_B$ in $X_B$. If $\rho\neq 0$, then $\phi_\rho\neq 0$.
	\end{Th}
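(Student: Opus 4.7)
The plan is to apply the direct method of the calculus of variations. All the ingredients are already in place: by Proposition~\ref{PropIB}, $I_B$ is coercive, strictly convex, and weakly lower semicontinuous on $\cD^{1,2}(\R^3)$, while the lemma at the opening of this section ensures that $X_B$ is a weakly closed convex subset. Uniqueness will come for free from strict convexity, and the nontriviality will be obtained by exhibiting an admissible competitor along which $I_B$ drops strictly below $I_B(0)$.

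For existence, I would first take a minimizing sequence $\{\phi_n\}\subset X_B$. Coercivity bounds it in $\cD^{1,2}(\R^3)$, so along a subsequence $\phi_n\weakto\phi_\rho$ weakly in $\cD^{1,2}(\R^3)$; weak closedness of $X_B$ puts $\phi_\rho\in X_B$, and weak lower semicontinuity gives $I_B(\phi_\rho)\le\inf_{X_B}I_B$, so $\phi_\rho$ is a minimizer. Uniqueness is then the standard midpoint argument: two distinct minimizers would have a midpoint in $X_B$ (by convexity of the set) with strictly smaller energy (by strict convexity), contradicting minimality.

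For nontriviality when $\rho\neq 0$, I would pick $\psi\in X_B$ with $\langle\rho,\psi\rangle\neq 0$, and replace $\psi$ by $-\psi$ if necessary (which still lies in $X_B$ since the defining constraint is symmetric in $\pm\psi$) so that $\langle\rho,\psi\rangle>0$. By convexity of $X_B$ together with $0\in X_B$, $t\psi\in X_B$ for every $t\in[0,1]$. Applying the concavity bound $b^{q/2}-(b-h)^{q/2}\le\frac{q}{2}(b-h)^{q/2-1}h$ with $b=1+|B|^2$ and $h=t^2(|\nabla\psi|^2+(\nabla\psi\cdot B)^2)$, and using $|\nabla\psi|_\infty<\infty$ (from $\psi\in X_B$) together with $B\in L^\infty$ to keep the base bounded below uniformly in $x$, one obtains
\[
I_B(t\psi)-I_B(0)\le Ct^2-t\langle\rho,\psi\rangle
\]
for some $C>0$ and all sufficiently small $t>0$. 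This is strictly negative for small $t$, so $I_B(\phi_\rho)\le I_B(t\psi)<I_B(0)$, forcing $\phi_\rho\neq 0$.

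The only step requiring genuine care is this last one: one must verify that the base $1+|B|^2-t^2(|\nabla\psi|^2+(\nabla\psi\cdot B)^2)$ stays uniformly bounded away from $0$ for small $t$, so that the negative exponent $q/2-1<0$ does not blow up, and that the resulting pointwise bound has an $L^1$ dominant. Both are immediate from the Lipschitz control on $\psi$ supplied by $\psi\in X_B$, the assumption $B\in L^2\cap L^\infty$, and $\psi\in\cD^{1,2}(\R^3)$. The existence/uniqueness portion is a routine application of the direct method once Proposition~\ref{PropIB} and the weak closedness of $X_B$ are in hand.
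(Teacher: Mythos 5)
Your proposal is correct and follows essentially the same route as the paper: the direct method together with Proposition~\ref{PropIB} and the weak closedness of $X_B$ gives existence, strict convexity gives uniqueness, and nontriviality comes from a small-$t$ competitor estimate of the form $I_B(t\psi)-I_B(0)\le Ct^2-t\langle\rho,\psi\rangle$. The only (harmless) implementation difference is that the paper works with a competitor satisfying $|\nabla\phi|_\infty<1$ and applies the mean value theorem, whereas you apply the concavity tangent bound with the base evaluated at $t\psi$, which the $X_B$-constraint (together with $B\in L^\infty$) keeps uniformly bounded away from zero for small $t$.
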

	\begin{proof}
		Existence and uniqueness of the minimiser are consequences of Proposition \ref{PropIB}.\\
		To show that such a minimiser is nontrivial, observe that, if $t>0$ is small enough and $\phi\in X_B\setminus\{0\}$ with $|\nabla\phi|_\infty<1$ and  $\langle \rho,\phi\rangle>0$,
		\begin{align*}
			I_B(t\phi)-I_B(0)
			&=
			\frac{1}{q} \int_{\R^3} \Big(\big(1+|B|^2\big)^{q/2} - \big(1+|B|^2-t^2|\nabla \phi|^2-t^2(\nabla \phi\cdot B)^2\big)^{q/2}\Big) dx 
			-t\langle \rho, \phi\rangle.
		\end{align*}
		First observe that, for a.e. $x\in\R^3$,
		\[
		|\nabla \phi(x)|^2+(\nabla \phi(x)\cdot B(x))^2
		\leq
		|\nabla\phi|_\infty^2 (1+|B(x)|^2).
		\]
		Now, let $\mathfrak{K}_1>1$, $\mathfrak{K}_2\geq0$, $\mathfrak{K}_2\leq\kappa\mathfrak{K}_1$ with $\kappa\in(0,1)$, and, for $t\in(0,1)$, consider the function $g(s):=(\mathfrak{K}_1-s^2\mathfrak{K}_2)^{q/2}$ in $[0,t]$. If $\mathfrak{K}_2=0$, then $g$ is constant so that $g(t)-g(0)=0$. If $\mathfrak{K}_2>0$, applying the Lagrange Theorem we have that there exists $\xi\in[0,t]$ such that
		\[
		g(t)-g(0)=g'(\xi)t,
		\]
		namely
		\[
		\mathfrak{K}_1^{q/2} - (\mathfrak{K}_1-t^2\mathfrak{K}_2)^{q/2} 
		=\frac{q\mathfrak{K}_2}{(\mathfrak{K}_1-\xi^2\mathfrak{K}_2)^{1-q/2}} \xi t
		\leq \frac{q\mathfrak{K}_2}{(\mathfrak{K}_1-\mathfrak{K}_2)^{1-q/2}} t^2,
		\]
		being
		$$
		\mathfrak{K}_1-\xi^2\mathfrak{K}_2
		\geq\mathfrak{K}_1-t^2\mathfrak{K}_2
		\geq\mathfrak{K}_1-\mathfrak{K}_2
		\geq(1-\kappa)\mathfrak{K}_1>1-\kappa>0.$$
		Hence, applying the previous arguments for $\mathfrak{K}_1:=1+|B(x)|^2$, $\mathfrak{K}_2:=|\nabla \phi(x)|^2+(\nabla \phi(x)\cdot B(x))^2$, $x\in\R^3$, and $\kappa=|\nabla\phi|_\infty$,
		we obtain
		\[
		I_B(t\phi)-I_B(0) 
		\leq t^2 \int_{\R^3} \frac{|\nabla \phi|^2+(\nabla \phi\cdot B)^2}{\big(1+|B|^2-|\nabla \phi|^2-(\nabla \phi\cdot B)^2\big)^{1-q/2}} dx 
		-t\langle \rho, \phi\rangle
		\]
		and so, observing that by \eqref{finint} for $\psi=\phi$ the integral in the previous formula is finite 
		and taking $t>0$ small enough, we can conclude.
	\end{proof}

	The importance of the minimiser of $I_B$ relies in the fact that, due to the convexity and using Definition \ref{def_ws}, it can be proved in a classical way (analogously to \cite[Proposition 2.6]{DenisPietroAlessio}) that a weak solution of \eqref{eqs} must minimise $I_B$.
	
	Moreover, such a minimiser satisfies the following property.
	
	\begin{Prop}\label{pr:sol-deb}
		Assume $\rho\in X^{*}_B$ and let $\phi_\rho$ be the unique minimizer of $I_B$ in $X_B$. Then for all $\psi\in X_B $, we have the variational inequality
		\begin{equation}	\label{eq:ineqcc}
			\int_{\R^3} \frac{  |\nabla  \phi_\rho|^2+(\nabla\phi_\rho\cdot B)^2- \nabla \phi_\rho \cdot \nabla \psi-(\nabla\phi_\rho\cdot B)(\nabla\psi\cdot B)}{ \big(1+|B|^2- |\nabla \phi_\rho|^2-(\nabla\phi_\rho\cdot B)^2\big)^{1-q/2}}\, dx
			\le \langle \rho,\phi_\rho\rangle - \langle\rho,\psi \rangle.
		\end{equation}
	\end{Prop}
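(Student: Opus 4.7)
My plan is to exploit the convexity of $X_B$ to derive a one-sided directional inequality from the minimality of $\phi_\rho$. For $\psi \in X_B$ and $t \in (0,1]$, set $\phi_t := (1-t)\phi_\rho + t\psi$, which lies in $X_B$ by convexity. Writing
$$g(\phi) := \bigl(1+|B|^2-|\nabla\phi|^2-(\nabla\phi\cdot B)^2\bigr)^{q/2}, \qquad \ell_t := \frac{g(\phi_\rho)-g(\phi_t)}{t},$$
the minimality inequality $I_B(\phi_t)\ge I_B(\phi_\rho)$ rewrites as
\begin{equation}\label{eq:plandiff}
\frac{1}{q}\int_{\R^3}\ell_t\,dx \ge \langle\rho,\psi-\phi_\rho\rangle, \qquad t \in (0,1].
\end{equation}

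The crux is passing to the limit $t \downarrow 0^+$. The concavity of $g$ in $\nabla\phi$ (from Proposition \ref{PropIB}) applied via $\phi_s = (1 - s/t)\phi_\rho + (s/t)\phi_t$ for $0<s<t$ yields $\ell_s \le \ell_t$ pointwise, so $\ell_t$ decreases pointwise as $t\downarrow 0^+$ to some limit $\ell_0 \in [-\infty,+\infty]$. On the set where $h:=1+|B|^2-|\nabla\phi_\rho|^2-(\nabla\phi_\rho\cdot B)^2$ is strictly positive, $g$ is smooth at $\phi_\rho$ and a direct differentiation identifies
$$\ell_0 = -q\,\frac{|\nabla\phi_\rho|^2+(\nabla\phi_\rho\cdot B)^2 - \nabla\phi_\rho\cdot\nabla\psi - (\nabla\phi_\rho\cdot B)(\nabla\psi\cdot B)}{h^{1-q/2}},$$
i.e.\ $-q$ times the integrand in \eqref{eq:ineqcc}. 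On $Z:=\{h=0\}$ one has $g(\phi_\rho)=0\le g(\phi_t)$, hence $\ell_t\le 0$ and $\ell_0\le 0$.

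Since $I_B(\phi_{t_0})\in\R$ for any fixed $t_0\in(0,1]$, $\int\ell_{t_0}\,dx$ is finite; the monotone convergence theorem applied to the nonnegative increasing family $\ell_{t_0}-\ell_t \nearrow \ell_{t_0}-\ell_0$ then gives $\int\ell_t\,dx \to \int\ell_0\,dx$. Passing to the limit in \eqref{eq:plandiff} with the finite lower bound $q\langle\rho,\psi-\phi_\rho\rangle$ shows $\ell_0\in L^1(\R^3)$ and $\tfrac{1}{q}\int\ell_0\,dx \ge \langle\rho,\psi-\phi_\rho\rangle$. To match this with the integrand of \eqref{eq:ineqcc}, Cauchy--Schwarz shows that its numerator $N$ is nonnegative on $Z$; an expansion gives $g(\phi_t)\sim(2N)^{q/2}t^{q/2}$ on $\{N>0\}\cap Z$, making $\ell_0=-\infty$ there, which the integrability of $\ell_0$ rules out as a positive-measure set. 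Hence, with the natural convention $0/0=0$, $-\ell_0/q$ coincides a.e.\ with the integrand of \eqref{eq:ineqcc}, and combining with $\int_Z\ell_0\,dx\le 0$ one obtains the required bound $-\tfrac{1}{q}\int_{\R^3}\ell_0\,dx \le \langle\rho,\phi_\rho\rangle-\langle\rho,\psi\rangle$, which is \eqref{eq:ineqcc}.

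The main technical obstacle is the loss of smoothness of $g$ along $Z$, since $s\mapsto s^{q/2}$ has infinite slope at $s=0$ for $q<2$; the singularity is controlled via the integrability of $\ell_0$, itself a nontrivial consequence of the minimality inequality combined with monotone convergence, which forces the pathological portion of $Z$ to be negligible.
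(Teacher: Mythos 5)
Your proposal is correct, but it takes a genuinely different route from the paper's in the key limiting step. Both arguments start from the same minimality inequality along the convex path $\phi_t=(1-t)\phi_\rho+t\psi\in X_B$; the difference is how one passes to the limit $t\downarrow 0^+$. You exploit the pointwise concavity of $X\mapsto\big(1+|B|^2-|X|^2-(X\cdot B)^2\big)^{q/2}$, so that the difference quotients $\ell_t$ are monotone in $t$, and you conclude by monotone convergence anchored at a fixed $t_0$ (with $\ell_{t_0}\in L^1$, which indeed follows from \eqref{fundest}); the uniform lower bound coming from minimality then forces $\ell_0\in L^1$ and gives the inequality, with the degenerate set handled by your expansion $g(\phi_t)\sim(2N)^{q/2}t^{q/2}$ where the numerator $N$ is positive. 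The paper instead differentiates via the Lagrange theorem, builds an explicit $L^1$ majorant of the difference quotients --- using the estimate $1+|B|^2-|\nabla\phi_\vartheta|^2-(\nabla\phi_\vartheta\cdot B)^2\ge(1-t)\big(1+|B|^2-|\nabla\phi_\rho|^2-(\nabla\phi_\rho\cdot B)^2\big)$, the preliminary bound \eqref{eq:inqrhopsi} obtained with $\psi=0$ via Fatou, and the splitting into $\cA_\rho$ and $\cA_\rho^c$ --- and then applies dominated convergence. Your monotone-convergence argument is shorter and dispenses with the construction of the dominating function; the paper's route produces the explicit integrability statement \eqref{eq:inqrhopsi} and the fact that $E_B:=\{1+|B|^2-|\nabla\phi_\rho|^2-(\nabla\phi_\rho\cdot B)^2=0\}$ is null as byproducts. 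This last point is the one (minor) place where your write-up is weaker than the statement as the paper reads it: you only show that $E_B\cap\{N>0\}$ is null and invoke the convention $0/0=0$ on the remainder, whereas the paper shows $|E_B|=0$, so no convention is needed in \eqref{eq:ineqcc}. The fix is immediate within your own scheme: apply your argument with $\psi=0$, for which $N=1+|B|^2\ge 1$ on $E_B$, so the integrability of $\ell_0$ already yields $|E_B|=0$, and the convention can be dropped for every $\psi$.
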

	\begin{proof}
		Observe that for every $t\in [0,1]$ and $\psi\in X_B$, $\phi_t=\phi_\rho +t (\psi - \phi_\rho)\in X_B$, we have $I_B(\phi_\rho)\le I_B(\phi_t)$, namely
		\begin{align*}
			\xi(\psi,t)&:=\frac1q\int_{\R^3} \displaystyle\Big(1+|B|^2-|\nabla\phi_{t}|^2 -(\nabla\phi_t\cdot B)^2\Big)^{q/2}-\Big(1+|B|^2-|\nabla\phi_\rho|^2-(\nabla\phi_\rho\cdot B)^2\Big)^{q/2}\,  dx\\
			&\le t (\langle \rho,\phi_\rho\rangle - \langle\rho,\psi \rangle).
		\end{align*}
		Observe that for $t\in(0,1)$
		$$ \frac{ \xi(\psi,t)- \xi(\psi,0)}{t}\leq \langle \rho,\phi_\rho\rangle - \langle\rho,\psi \rangle
		$$
		and so
		\begin{equation}\label{eq:ineqlimsup}
			\limsup_{t\to 0^+}\frac{ \xi(\psi,t)- \xi(\psi,0)}{t}\leq  \langle \rho,\phi_\rho\rangle - \langle\rho,\psi \rangle.
		\end{equation}
		Moreover, for $\psi=0$, we get $ \phi_t=(1-t) \phi_\rho$ and so 
		$$\Big(1+|B|^2-|\nabla\phi_{t}|^2 -(\nabla\phi_t\cdot B)^2\Big)^{q/2}-\Big(1+|B|^2-|\nabla\phi_\rho|^2-(\nabla\phi_\rho\cdot B)^2\Big)^{q/2}\geq 0
		\quad \text{a.e. in }\R^3$$
		and, if $E_B:=\{x\in\R^3:1+|B(x)|^2-|\nabla\phi_\rho(x)|^2-(\nabla\phi_\rho(x)\cdot B(x))^2=0\}$,
		\begin{align*}
			&\langle \rho,\phi_\rho \rangle\\
			&\geq \frac{ \xi(\psi,t)- \xi(\psi,0)}{t}\\
			&=
			\int_{E_B^c} \frac{\Big(1+|B|^2-|\nabla\phi_{t}|^2 -(\nabla\phi_t\cdot B)^2\Big)^{q/2}-\Big(1+|B|^2-|\nabla\phi_\rho|^2-(\nabla\phi_\rho\cdot B)^2\Big)^{q/2}}{t} \, dx\\
			&\qquad
			+ \frac{(2-t)^{q/2}}{t^{1-q/2}}	\int_{E_B} (1+|B|^2)^{q/2}\, dx\\
			&\geq
			\int_{E_B^c} \frac{\Big(1+|B|^2-|\nabla\phi_{t}|^2 -(\nabla\phi_t\cdot B)^2\Big)^{q/2}-\Big(1+|B|^2-|\nabla\phi_\rho|^2-(\nabla\phi_\rho\cdot B)^2\Big)^{q/2}}{t} \, dx\\
			&\qquad
			+ \frac{(2-t)^{q/2}}{t^{1-q/2}}	|E_B|.
		\end{align*}
		Therefore, $|E_B|=0$ and, by Fatou's lemma,
		\begin{equation}\label{eq:inqrhopsi}
			\int_{\R^3}\frac{  |\nabla  \phi_\rho|^2+(\nabla\phi_\rho\cdot B)^2}{ \big(1+|B|^2- |\nabla \phi_\rho|^2-(\nabla\phi_\rho\cdot B)^2\big)^{1-q/2}}\,dx
			\leq
			\limsup_{t\to 0^+}\frac{ \xi(0,t)- \xi(0,0)}{t}
			\leq  \langle \rho,\phi_\rho \rangle,
		\end{equation}
		hence \eqref{eq:ineqcc} holds for $\psi=0$.\\
		Let us assume that $\psi\neq 0$ and write
		\[
		\frac{ \xi(\psi,t)- \xi(\psi,0)}{t}
		= \int_{\R^3} \frac{\mathfrak{f}(\psi,t)-\mathfrak{f}(\psi,0)}{t} \, dx
		\]
		where
		\[
		\mathfrak{f}(\psi,t)=\frac{1}{q}\Big(1+|B|^2-|\nabla\phi_{t}|^2 -(\nabla\phi_t\cdot B)^2\Big)^{q/2}.
		\]
		We claim that
		\[
		\left|\frac{ \mathfrak{f}(\psi,t)- \mathfrak{f}(\psi,0)}{t}\right| \\
		\le 
		C \frac{ |\nabla \phi_\rho |^2+ (\nabla\phi_\rho\cdot B)^2+  |\nabla \psi|^2+ (\nabla\psi\cdot B)^2}{  \big(1+|B|^2- |\nabla \phi_\rho|^2-(\nabla\phi_\rho\cdot B)^2\big)^{1-q/2}}\in L^1(\R^3).
		\]
		Using the Lagrange Theorem we have
		\begin{equation}\label{DCT}
			\frac{\mathfrak{f}(\psi,t)-\mathfrak{f}(\psi,0)}{t}
			=\frac{\partial\mathfrak{f}}{\partial t}(\psi,\vartheta)
			=-\frac{\nabla\phi_{\vartheta} \cdot \nabla(\psi-\phi_\rho)+(\nabla\phi_{\vartheta} \cdot B)(\nabla(\psi-\phi_\rho)\cdot B)}{\Big(1+|B|^2-|\nabla\phi_{\vartheta}|^2 -(\nabla\phi_{\vartheta}\cdot B)^2\Big)^{1-q/2}}
		\end{equation}
		with $\vartheta\in[0,t]$.\\
		Observe that, since
		\begin{align*}
			\left|\nabla\phi_\vartheta \cdot \nabla(\psi-\phi_\rho)\right|
			&\leq
			(1-\vartheta)|\nabla\phi_\rho| |\nabla\psi|
			+(1-\vartheta)|\nabla\phi_\rho|^2
			+\vartheta|\nabla\psi|^2
			+\vartheta|\nabla\phi_\rho|
			|\nabla\psi|\\
			&=
			(1-\vartheta)|\nabla\phi_\rho|^2
			+|\nabla\phi_\rho|
			|\nabla\psi|
			+\vartheta|\nabla\psi|^2\\
			&\leq
			|\nabla\phi_\rho|^2
			+|\nabla\phi_\rho|
			|\nabla\psi|
			+|\nabla\psi|^2
			\leq
			2(|\nabla\phi_\rho|^2
			+|\nabla\psi|^2)
		\end{align*}
		and, analogously,
		\[
		\left|(\nabla\phi_\vartheta \cdot B)(\nabla(\psi-\phi_\rho)\cdot B)\right|
		\leq
		2[(\nabla\phi_\rho\cdot B)^2
		+(\nabla\psi\cdot B)^2],
		\]
		we have
		\[
		\left|\nabla\phi_\vartheta \cdot \nabla(\psi-\phi_\rho)+(\nabla\phi_\vartheta \cdot B)(\nabla(\psi-\phi_\rho)\cdot B)\right|
		\leq
		2[|\nabla\phi_\rho|^2
		+(\nabla\phi_\rho\cdot B)^2
		+|\nabla\psi|^2
		+(\nabla\psi\cdot B)^2].
		\]
		Moreover, since
		\begin{align*}
			|\nabla\phi_{\vartheta}|^2
			&
			=(1-\vartheta)^2|\nabla\phi_{\rho}|^2
			+2 \vartheta(1-\vartheta)\nabla\phi_{\rho}\cdot\nabla\psi
			+ \vartheta^2|\nabla\psi|^2\\
			&\leq
			(1-\vartheta)^2|\nabla\phi_{\rho}|^2
			+ \vartheta(1-\vartheta)|\nabla\phi_{\rho}|^2
			+ \vartheta(1-\vartheta)|\nabla\psi|^2
			+ \vartheta^2|\nabla\psi|^2\\
			&=
			(1-\vartheta)|\nabla\phi_{\rho}|^2
			+ \vartheta|\nabla\psi|^2
		\end{align*}
		and, analogously,
		\[
		(\nabla\phi_{\vartheta}\cdot B)^2
		\leq
		(1-\vartheta)(\nabla\phi_{\rho}\cdot B)^2
		+ \vartheta(\nabla\psi\cdot B)^2,
		\]
		since $\psi\in X_B$, we have that
		\begin{align*}
			1+|B|^2-|\nabla\phi_{\vartheta}|^2 -(\nabla\phi_{\vartheta}\cdot B)^2
			&\geq
			1+|B|^2
			-(1-\vartheta)[|\nabla\phi_{\rho}|^2+(\nabla\phi_{\rho}\cdot B)^2]\\
			&\quad
			- \vartheta[|\nabla\psi|^2+(\nabla\psi\cdot B)^2]\\
			&\geq
			(1-\vartheta) [1+|B|^2
			-|\nabla\phi_{\rho}|^2-(\nabla\phi_{\rho}\cdot B)^2]\\
			&\geq
			(1-t) [1+|B|^2
			-|\nabla\phi_{\rho}|^2-(\nabla\phi_{\rho}\cdot B)^2]
		\end{align*}
		Then
		\[
		\left|\frac{\mathfrak{f}(\psi,t)-\mathfrak{f}(\psi,0)}{t}
		\right|
		\leq
		\frac{2}{(1-t)^{1-q/2}}
		\frac{ |\nabla \phi_\rho |^2+ (\nabla\phi_\rho\cdot B)^2+  |\nabla \psi|^2+ (\nabla\psi\cdot B)^2}{  \big(1+|B|^2- |\nabla \phi_\rho|^2-(\nabla\phi_\rho\cdot B)^2\big)^{1-q/2}}.
		\]
		From \eqref{eq:inqrhopsi} we infer that 
		$$\frac{ |\nabla \phi_\rho |^2+ (\nabla\phi_\rho\cdot B)^2}{  \big(1+|B|^2- |\nabla \phi_\rho|^2-(\nabla\phi_\rho\cdot B)^2\big)^{1-q/2}}\in L^1(\R^3).$$
		Moreover, if $\cA_\rho:=\{x\in\R^3:|\nabla \phi_\rho(x)|^2+(\nabla\phi_\rho(x)\cdot B(x))^2 < (1+|B(x)|^2)/4 \}$,
		since in $\cA_\rho^c$
		\[
		|\nabla \psi|^2 + (\nabla\psi\cdot B)^2
		\leq 1+|B|^2
		\leq 4 ( |\nabla \phi_\rho|^2 +(\nabla\phi_\rho\cdot B)^2),
		\]
		we have
		\begin{align*}
			&\int_{\R^3} \frac{ |\nabla \psi|^2+ (\nabla\psi\cdot B)^2 }{  \big(1+|B|^2- |\nabla \phi_\rho|^2-(\nabla\phi_\rho\cdot B)^2\big)^{1-q/2}}\, dx\\
			&=
			\int_{\cA_\rho} \frac{ |\nabla \psi|^2 + (\nabla\psi\cdot B)^2}{  \big(1+|B|^2- |\nabla \phi_\rho|^2-(\nabla\phi_\rho\cdot B)^2\big)^{1-q/2}}\, dx\\
			&\qquad
			+\int_{\cA_\rho^c} \frac{ |\nabla \psi|^2 + (\nabla\psi\cdot B)^2}{  \big(1+|B|^2- |\nabla \phi_\rho|^2-(\nabla\phi_\rho\cdot B)^2\big)^{1-q/2}}\, dx\\
			&\leq
			C\Big(\int_{\R^3} |\nabla \psi|^2+ (\nabla\psi\cdot B)^2 \, dx
			+\int_{\R^3} \frac{ |\nabla \phi_\rho|^2 +(\nabla\phi_\rho\cdot B)^2}{  \big(1+|B|^2- |\nabla \phi_\rho|^2-(\nabla\phi_\rho\cdot B)^2\big)^{1-q/2}}\, dx\Big).
		\end{align*}
		In view of \eqref{DCT}, by Lebesgue's Dominated Convergence Theorem, we may compute \eqref{eq:ineqlimsup} which implies \eqref{eq:ineqcc}.
	\end{proof}

	\subsection{Cylindrical magnetic field}

	We show that, in general,  we cannot consider $\cO(3)$-equivariant $B\neq 0$. 
	Indeed we have
	\begin{Prop}\label{prop:B0}
		If $B$ is $\cO(3)$-equivariant and $B=\curlop A$ for some $A\in W^{1,1}_{loc}(\R^3,\R^3)$, then $B=0$.
	\end{Prop}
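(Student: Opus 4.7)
The plan is to exploit the tension between $\cO(3)$-equivariance, which forces $B$ to have a very rigid pointwise structure, and the distributional divergence-free property inherent to any curl. First I would pin down the form of $B$: for each $x\neq 0$, the stabilizer of $x$ in $\cO(3)$ is a copy of $\mathrm{O}(2)$ acting on $x^\perp$, and the equivariance $B(gx)=gB(x)$ forces $B(x)$ to be fixed by this stabilizer, hence to be a scalar multiple of $x$. Equivariance under rotations mapping spheres to themselves then makes the scalar depend only on $|x|$, so $B(x)=f(|x|)\,x$ a.e.\ for some measurable $f:(0,\infty)\to\R$, with $r\mapsto r^3 f(r)\in L^1_{loc}([0,\infty))$ coming from $B=\curlop A\in L^1_{loc}(\R^3,\R^3)$.

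Second, since $B=\curlop A$ distributionally, $\div B=0$ in $\cD'(\R^3)$. I would test against radial functions $\varphi(x)=\phi_0(|x|)$ with $\phi_0\in\cC_0^\infty((0,\infty))$. Using $\nabla\varphi=\phi_0'(|x|)\,x/|x|$ and passing to spherical coordinates,
\[
0=\int_{\R^3}B\cdot\nabla\varphi\,dx=4\pi\int_0^\infty r^3 f(r)\,\phi_0'(r)\,dr,
\]
so $r^3 f(r)$ has vanishing distributional derivative on $(0,\infty)$ and hence equals a constant $C$. Therefore $B(x)=C\,x/|x|^3$ on $\R^3\setminus\{0\}$.

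Finally I would invoke the classical distributional identity
\[
\div\!\left(\frac{x}{|x|^3}\right)=-\Delta\!\left(\frac{1}{|x|}\right)=4\pi\,\delta_0 \qquad\text{in }\cD'(\R^3).
\]
Combined with $\div B=0$, this forces $4\pi C\delta_0=0$, so $C=0$ and $B\equiv 0$.

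The only genuinely delicate point is the second step: one must justify the passage from \emph{equivariant + divergence-free} to the explicit formula $B=Cx/|x|^3$ while keeping careful track of the low regularity ($A\in W^{1,1}_{loc}$ yields only $B\in L^1_{loc}$, and $\div B=0$ must be handled across the singular origin). Once that reduction is made, the $\delta_0$ identity closes the argument at once; steps 1 and 3 are essentially standard.
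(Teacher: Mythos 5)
Your argument is correct and takes a genuinely different route from the paper. The first step coincides with the paper's Lemma \ref{lem:O3grad}: the stabilizer argument forcing $B(x)=f(|x|)\,x$ is exactly how the paper reduces an $\cO(3)$-equivariant $L^1_{loc}$ field to a radial gradient, and your pointwise use of a.e.\ equivariance is at the same level of rigor as the paper's own proof of that lemma. From there the two proofs diverge. The paper observes that $B=\nabla\psi(|x|)$ is curl-free, invokes Leinfelder's gauge lemma \cite{Le} to write $B=\nabla\vp$ globally with $\vp\in W^{1,1}_{loc}$, uses $\div B=\div(\curlop A)=0$ to conclude $\Delta\vp=0$, hence $B$ harmonic, and then kills $B$ via the standing assumption $B\in L^2(\R^3,\R^3)$ of Section \ref{sec:electrostatic} (a Liouville-type step). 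You instead use only the divergence-free condition: away from the origin, equivariance plus $\div B=0$ pins $B$ down to the monopole $C\,x/|x|^3$ (the du Bois--Reymond step is legitimate since $r^3f\in L^1_{loc}$, as you note), and then the identity $\div(x/|x|^3)=4\pi\delta_0$ in $\cD'(\R^3)$ contradicts $\div(\curlop A)=0$ across the origin---which holds precisely because $A\in W^{1,1}_{loc}(\R^3,\R^3)$, so one may move both derivatives onto the test function---unless $C=0$. What each approach buys: yours is more self-contained and proves the proposition exactly as stated, needing only $B\in L^1_{loc}$ (automatic from the hypothesis) rather than the global $L^2$ integrability the paper silently imports, and it avoids the external gauge lemma; the paper's route, by contrast, does not require identifying the explicit monopole profile and its harmonicity-plus-decay mechanism is the more standard template in this setting. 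Either way the conclusion $B=0$ follows.
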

	
	To prove it we need the following preliminary result.
	\begin{Lem}\label{lem:O3grad}
		If $A\in L^1_{loc}(\R^3,\R^3)$ is $\cO(3)$-equivariant, then 	$A(x)=\nabla \psi(|x|)$ for $x\in \R^3\setminus\{0\}$ for some absolutely continuous function $\psi:(0,+\infty)\to\R$.
	\end{Lem}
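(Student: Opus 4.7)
The plan is to mollify $A$ with a radial kernel to reduce to the smooth case, use the stabilizer of a point under the $\cO(3)$-action to extract the radial direction of $A$, and then define $\psi$ by integrating the resulting scalar function.

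First, I would fix a family of nonnegative, $\cO(3)$-invariant mollifiers $\{\rho_\epsilon\}_{\epsilon>0}$ with $\supp\rho_\epsilon\subset B_\epsilon$ and $\int\rho_\epsilon=1$, and set $A_\epsilon:=A*\rho_\epsilon\in\cC^\infty(\R^3,\R^3)$. A change of variable combined with the orthogonal invariance of Lebesgue measure, the invariance of $\rho_\epsilon$, and the (a.e.) $\cO(3)$-equivariance of $A$ yields the pointwise equivariance $A_\epsilon(gx)=gA_\epsilon(x)$ for every $g\in\cO(3)$ and every $x\in\R^3$. For $x\neq 0$, the stabilizer $\{g\in\cO(3):gx=x\}$ acts on the splitting $\R^3=\R x\oplus x^\perp$ as the identity on $\R x$ and as $\cO(2)$ on $x^\perp$, so its fixed subspace in $\R^3$ equals $\R x$; applying the equivariance to this subgroup gives $A_\epsilon(x)\in\R x$, so $A_\epsilon(x)=H_\epsilon(|x|)x$ for a smooth function $H_\epsilon$ on $(0,\infty)$ (the constancy on spheres being a second consequence of equivariance, by applying $A_\epsilon(gx)=gA_\epsilon(x)$ with arbitrary $g\in\cO(3)$). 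Setting $\psi_\epsilon(r):=\int_1^r sH_\epsilon(s)\,ds$ then yields $\nabla(\psi_\epsilon(|x|))=\psi_\epsilon'(|x|)\tfrac{x}{|x|}=H_\epsilon(|x|)x=A_\epsilon(x)$ on $\R^3\setminus\{0\}$.

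To conclude, I would pass to the limit $\epsilon\to 0^+$. Since $A_\epsilon\to A$ in $L^1_{loc}(\R^3,\R^3)$, a subsequence converges a.e., and for such $x\neq 0$ the identity $A_\epsilon(x)=H_\epsilon(|x|)x$ forces $H_\epsilon(|x|)\to A(x)\cdot x/|x|^2$; since each $H_\epsilon$ is constant on spheres, Fubini in spherical coordinates produces a measurable $H:(0,\infty)\to\R$ with $A(x)=H(|x|)x$ a.e.\ on $\R^3$. Rewriting $A\in L^1_{loc}(\R^3,\R^3)$ in spherical coordinates gives $\int_\delta^R r^3|H(r)|\,dr<\infty$ for every $0<\delta<R$, hence $r\mapsto rH(r)\in L^1_{loc}((0,\infty))$. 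Defining $\psi(r):=\int_1^r sH(s)\,ds$ then produces a function absolutely continuous on every compact subinterval of $(0,\infty)$, with $\nabla(\psi(|x|))=|x|H(|x|)\tfrac{x}{|x|}=H(|x|)x=A(x)$ for a.e.\ $x\in\R^3\setminus\{0\}$.

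The main subtlety is to extract a well-defined scalar $H$ on $(0,\infty)$ from the a.e.\ convergence of the full vector fields $A_\epsilon$ on $\R^3$; the Fubini argument above handles it, exploiting crucially that $x\mapsto H_\epsilon(|x|)$ is constant on every sphere for each $\epsilon$, and that the vector identity $A_\epsilon(x)=H_\epsilon(|x|)x$ therefore survives passage to the limit.
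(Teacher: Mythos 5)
Your argument is correct and rests on the same key idea as the paper's own proof: for $x\neq 0$ the stabilizer of $x$ in $\cO(3)$ has fixed subspace $\R x$, which forces the field to point radially with a profile constant on spheres, and $\psi$ is then obtained by integrating the locally integrable radial component from $1$ to $r$. The only difference is that the paper applies this isotropy argument directly to the a.e.-defined $A$, whereas you first regularize with $\cO(3)$-invariant mollifiers and recover the scalar profile in the limit via a Fubini argument in spherical coordinates — a more careful implementation of the same route, not a genuinely different one.
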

	\begin{proof}
		Let us fix $x\in\R^3\setminus\{0\}$ and consider the isotropy group of $x$
		$$\cO_x=\{g\in \cO(3)|\; gx=x\}.$$
		Observe that
		$$\{y\in\R^3\setminus\{0\}|\; \cO_x=\cO_y\}=\R x\setminus\{0\}.$$
		Then, if $x\in\R^3\setminus\{0\}$, $gA(x)=A(gx)=A(x)$ for all $g\in\cO_x$ and so $A(x)\in\R x\setminus\{0\}$. Hence, using the $\cO(3)$-invariance of $A$, there exists a function $\vp:(0,+\infty)\to\R\setminus\{0\}$ such that 			$A(x)=\vp(|x|)x/|x|$ and, due to the local integrability of $A$,
		the map 
		$$\R^3\setminus\{0\}\ni x\mapsto\vp(|x|)= A(x)\cdot \frac{x}{|x|}\in \R\setminus\{0\}$$
		is locally integrable.\\
		Thus, let us consider the function $\psi:(0,+\infty)\to\R$, defined by $$\psi(r)=\int_1^r \vp(s)\, ds.$$ 
		Then we have that $\psi$ is absolutely continuous and $A(x)=\nabla \psi(|x|)$ for $x\in \R^3\setminus\{0\}$.
	\end{proof}
	
	\begin{proof}[Proof of Proposition \ref{prop:B0}]
		Let  $B$ be $\cO(3)$-equivariant. By Lemma \ref{lem:O3grad}, $B(x)=\nabla \psi (|x|)$ for some absolutely continuous function $\psi:(0,\infty)\to \R$. Since $\curlop B=0$ in the distributional sense, in view of \cite[Lemma 1.1]{Le}, there is $\vp\in W^{1,1}_{loc}(\R^3,\R^3)$ such that $B=\nabla \vp$. Since $\div(B)=\div(\curlop A)$ in the distributional sense, we get $\div(\nabla \vp)=-\Delta \vp =0$, hence $\vp$ is a harmonic function. Therefore $B=\nabla \vp$ is harmonic as well, and since $B\in L^2(\R^3,\R^3)$, we obtain $B=0$.
	\end{proof}

	In view of Proposition \ref{prop:B0} the $\cO(3)$-equivariance of $B$ is too strong. Therefore we assume that $B$ is of the form \eqref{eq:cylassum} with $b$ radially symmetric so that $B$ is $\cO(2)\times\id$-equivariant.\\
	We observe that 	
	\begin{equation}\label{orth}
		\nabla \phi\cdot B=0
	\end{equation}
	provided that $\phi\in X_B$ is radial.

	Let $X_r$
	be the subset of radial functions of $X_B$.
	If $\rho\in X_B^*$ is a {\em radial distribution of charge}, namely, for every $g\in \cO(3)$, $g\rho=\rho$, where, for $\phi\in X_B$,  $\langle g\rho,\phi\rangle:=\langle \rho, g\phi\rangle$, being $g\phi(x):=\phi(gx)$, then
	$$I_B(\phi)=\frac{1}{q}\int_{\R^3} \Big(1 - \big(1+|B|^2-|\nabla \phi|^2\big)^{q/2}\Big) dx
	- \langle \rho, \phi\rangle$$
	is $\cO(3)$-invariant.

	\begin{altproof}{Theorem \ref{thm:radial}}
		Since $I_B$ is $\cO(3)$-invariant, then for any $g \in \cO(3)$ we get
		$I_B(g\phi_\rho)=I_B(\phi_\rho)$ and so, $g\phi_\rho=\phi_\rho$ by the uniqueness of the minimum. Therefore  $\phi_\rho \in X_r$ and we will replace $\phi_\rho(x)$ by $\phi_\rho(\tau)$, where $\tau=|x|$. Since $b$ is $\cO(3)$-invariant, then will replace also $b(x)$ by $b(\tau)$.\\
		In order to prove that $\phi_\rho$ is a weak solution of the electrostatic problem, following \cite{ST}, we 
		define
		\[
		E_k:=\left\{\tau\ge 0\;\vline\;|\phi_\rho'(\tau)|^2-|b(\tau)|^2 \ge 1-\frac{1}{k}\right\}
		\]
		for $k\geq 1$.\\
		By \eqref{eq:ineqcc} and \eqref{orth} we infer that
		$$\frac{  |\nabla  \phi_\rho|^2}{ \big(1+|B|^2- |\nabla \phi_\rho|^2\big)^{1-q/2}}\in L^1(\R^3).$$
		Since the numerator $|\nabla \phi_\rho|$ is strictly bounded away from zero on each $E_k$ with $k\geq 2$,
		$E_\infty:=\big\{\tau\ge 0 \mid |\phi_\rho'(\tau)|^2-|b(\tau)|^2=1\big\}$
		is a set of measure $0$. Hence $\left|\bigcap_{k\ge 1} E_k\right|=0$.
		\\
		Now, let us take $\psi\in X_r\cap \cC^\infty_0(\R^3)$ with $\supp \psi\subset [0,R]$ for some $R>0$ and let
		\[
		\psi_k(\tau)=-\int_{\tau}^{+\infty} \psi'(s) [1-\chi_{E_k}(s)] ds.
		\]
		Clearly $\supp \psi_k\subset [0,R]$, for any $k\ge 1$. 
		Observe that for any $t\in\R$
		$$(\phi_\rho+t\psi_k)'(\tau)= \phi_\rho'(\tau)+t\psi'(\tau)[1-\chi_{E_k}(\tau)],$$
		and, if $\tau\in E_k$, then
		\[
		|(\phi_\rho+t\psi_k)'(\tau)|^2=|\phi_\rho'(\tau)|^2\le 1 + |b(\tau)|^2,
		\]
		otherwise,
		$$|(\phi_\rho+t\psi_k)'(\tau)|^2-|b(\tau)|^2\leq  1-\frac1k+2t\phi_\rho'(\tau)\psi'(\tau)+t^2|\psi'(\tau)|^2<1$$
		for $|t|$ small enough.\\
		Therefore $\phi_\rho+t\psi_k\in X_r$ provided that $|t|$ is sufficiently small.\\
		Now, arguing as in the proof of Proposition \ref{pr:sol-deb}, for every $k\ge 1$ we get
		\begin{equation}\label{eq:ELeq}
			\begin{split}
				&\lim_{t\to 0} \frac{I_B(\phi_\rho+t\psi_k)-I_B(\phi_\rho)}{t}\\
				&\qquad=\omega_N\int_0^{+\infty} \frac{\phi_\rho'\psi'}{\big(1+|b|^2-|\phi_\rho'|^2\big)^{1-q/2}}[1-\chi_{E_k}] \tau^{N-1}\, d\tau
				- \langle \rho,\psi_k\rangle = 0.
			\end{split}
		\end{equation}
		Moreover, since $E_{k+1}\subset E_k$ and $|E_k|\to 0$, as $k\to +\infty$, then $\chi_{E_k}\to 0$ a.e. in $\mathbb{R}^N$ and so, using the Lebesgue's Dominated Convergence Theorem, we have
		\[
		\int_0^{+\infty} \frac{\phi_\rho'\psi'}{\big(1+|b|^2-|\phi_\rho'|^2\big)^{1-q/2}}[1-\chi_{E_k}] \tau^{N-1}\, d\tau
		\to
		\int_0^{+\infty} \frac{\phi_\rho'\psi'}{\big(1+|b|^2-|\phi_\rho'|^2\big)^{1-q/2}} \tau^{N-1}\, d\tau.
		\]
		In addiction, due to $\psi_k \to \psi$ in $X_r$ as $k\to +\infty$, we have $\langle \rho,\psi_k\rangle \to \langle \rho,\psi\rangle$.\\
		Hence, taking the limit in \eqref{eq:ELeq} as $k\to\infty$, we conclude that for any $\psi\in X_r\cap \cC_0^\infty(\R^3)$
		\begin{equation}\label{solorad}
			\int_{\R^3} \frac{\nabla \phi_\rho \cdot \nabla \psi}{\big(1+|B|^2-|\nabla \phi_\rho|^2\big)^{1-q/2}} \ dx
			= \langle \rho,\psi\rangle.
		\end{equation}
		Finally, by a density argument we can show that \eqref{solorad} is satisfied also for any $\psi \in X_r$.\\
		Indeed, let $\psi \in X_r$ and take $\psi_n:=\zeta_n \ast (\chi_n \psi)$, where  $\zeta_n$ are smooth radially symmetric mollifiers with compact support, and $\chi_n:\R^3 \to \R$ are smooth radially symmetric functions such that $\chi_n(x)=1$ for $|x|\leq n$ and $\supp \chi_n \subset B(0,2n)$. Then  $\{\psi_n\}_n\subset \cC_0^\infty(\R^3)$, $\psi_n$'s are radially symmetric, $\psi_n\to \psi$ in $\D^{1,2}(\R^3)$, and $\{|\nabla \psi_n|_\infty\}$ is bounded. Then \eqref{solorad} holds for any $\psi \in X_r$.\\
		Now, to prove that \eqref{solorad} holds for every $\psi\in X_B$, we observe that, taking $\phi_\rho$, which is radially symmetric, as test function in \eqref{solorad}, we get
		\[
		\int_{\R^3} \frac{|\nabla \phi_\rho|^2}{\big(1+|B|^2-|\nabla \phi_\rho|^2\big)^{1-q/2}} \ dx
		= \langle \rho,\phi_\rho\rangle.
		\]
		Then, by \eqref{eq:ineqcc}, considering $\psi$ and $-\psi$, we get that for every $\psi\in X_B$,
		\[
		\langle \rho,\psi \rangle
		\le\int_{\R^3} \frac{ \nabla \phi_\rho \cdot \nabla \psi}{ \big(1+|B|^2- |\nabla \phi_\rho|^2\big)^{1-q/2}}\, dx
		\le \langle \rho,\psi \rangle
		\]
		concluding the proof. 
	\end{altproof}

	\section{Magnetostatic fields}\label{sec:magnetic}
	
	Let $J$ be a distribution and, since we are looking for solutions of \eqref{eq} and the curl of any vector field is divergence free, we impose the natural condition $\div J=0$, where the curl and the divergence are understood in the distributional sense.

	First of all we give the definition of solution to \eqref{eq}.
	\begin{Def}
		\label{defws}
		We say that a field $A\in L^1_{loc}(\R^3,\R^3)$ is a (weak) solution to \eqref{eq} if 
		\begin{equation*}%\label{def:sol}
			\int_{\R^3}  \left(\frac{\curlop A}{\big(1+|\curlop A|^2\big)^{1-q/2}}\cdot  \curlop B\right) \,dx =  \langle J,  B\rangle
		\end{equation*}
		for any $B\in\cC_0^{\infty}(\R^3,\R^3)$.
	\end{Def}

	Note that, for \eqref{eq}, one cannot expect {\em radial} solutions $A$, i.e. $\cO(3)$ equivariant vector fields, for $J\neq 0$. Indeed we have
	\begin{Th}\label{th:symmetry}
		Suppose that $A\in L^1_{loc}(\R^3,\R^3)$ is a {\em radially symmetric} solution to \eqref{eq}.
		Then $J=0$ a.e. in $\R^3$. 
	\end{Th}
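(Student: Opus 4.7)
The plan is to argue that radial symmetry forces $\curlop A=0$ almost everywhere in $\R^3$; once this is in hand, the left-hand side of the equation in Definition \ref{defws} vanishes identically, so that $\langle J,B\rangle=0$ for every $B\in\cC_0^\infty(\R^3,\R^3)$, whence $J=0$.

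I would first apply Lemma \ref{lem:O3grad} to $A$: since $A\in L^1_{loc}(\R^3,\R^3)$ is $\cO(3)$-equivariant, there is an absolutely continuous $\psi:(0,\infty)\to\R$ with $A(x)=\nabla\psi(|x|)$ on $\R^3\setminus\{0\}$. A short Fubini-type estimate from $A\in L^1_{loc}$ yields $\psi(|\cdot|)\in W^{1,1}_{loc}(\R^3\setminus\{0\})$ with distributional gradient $A$ on the punctured space. Consequently, for every test field $B\in\cC_0^\infty(\R^3\setminus\{0\},\R^3)$,
$$\int_{\R^3}A\cdot\curlop B\,dx=-\int_{\R^3}\psi(|x|)\,\div(\curlop B)\,dx=0,$$
where the last equality uses $\div\circ\curlop\equiv 0$. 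Thus the distributional curl of $A$ vanishes on the open set $\R^3\setminus\{0\}$. Interpreting the $\curlop A$ of Definition \ref{defws} as the a.e.-defined $L^1_{loc}$ representative of this distribution (implicit, since it is used pointwise inside $(1+|\curlop A|^2)^{1-q/2}$), we obtain $\curlop A=0$ a.e. on $\R^3\setminus\{0\}$, and hence a.e. on $\R^3$. Plugging this into the weak formulation gives $\langle J,B\rangle=0$ for every $B\in\cC_0^\infty(\R^3,\R^3)$, as desired.

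The only genuinely delicate point is the compatibility between the pointwise $\curlop A$ used in Definition \ref{defws} and the distributional curl computed above on the punctured space. This reduces to a routine unique-representative argument: the two dual pairings coincide against any $B\in\cC_0^\infty(\R^3\setminus\{0\},\R^3)$, $L^1_{loc}$ representatives of a distribution on an open set are unique a.e., and $\{0\}$ is negligible. Once this identification is made, the conclusion is immediate.
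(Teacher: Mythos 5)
Your argument is correct and follows essentially the same route as the paper: invoke Lemma \ref{lem:O3grad} to write $A=\nabla\psi(|x|)$, conclude $\curlop A=0$ a.e., and then the weak formulation in Definition \ref{defws} forces $\langle J,B\rangle=0$ for all $B\in\cC_0^\infty(\R^3,\R^3)$. The only difference is that you spell out the distributional justification that the curl of this gradient vanishes (and the identification with the pointwise representative), a step the paper leaves implicit.
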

	\begin{proof}
		If $A$ is $\cO(3)$-equivariant, then by Lemma \ref{lem:O3grad}, $\curlop A=0$ a.e. in $\R^3$ and so, for any $B\in\cC_0^{\infty}(\R^3,\R^3)$,
		\[
		\int_{\R^3}  \left(\frac{\curlop A}{\big(1+|\curlop A|^2\big)^{1-q/2}}\cdot  \curlop B \right)\,dx=0.
		\]
		Since $A$ is a weak solution to \eqref{eq}, then, for all $B\in\cC_0^{\infty}(\R^3,\R^3)$,
		$\langle J,  B\rangle=0$
		and we conclude.
	\end{proof}

	Therefore our aim will be to relax the radial symmetry and we will work in a Banach space $\cA$ that contains cylindrically symmetric vector fields that are solenoidal. 
	We will prove that the functional $\cJ$ is strictly convex and attains its minimum in $\cA$ for $q\in (6/5,2)$.
	
	Due to the different behavior in $0$ and at infinity of the function $x \mapsto (1+x^2)^{q/2}-1$ for $q\in(1,2)$, namely
	\[
	\begin{cases}
		(1+x^2)^{q/2}-1 \approx  qx^2/2&\text{ for } |x| \text{ small,}\\
		(1+x^2)^{q/2}-1 \approx  |x|^q &\text{ for } |x| \text{ large,}
	\end{cases}
	\]
	we consider the following Banach space
	\begin{align*}
		\cL
		&:=L^6(\R^3,\R^3)+ L^{q^*}(\R^3,\R^3)\\
		&=\{A\in\cM(\R^3,\R^3): A=A_1+A_2\;,A_1\in L^6(\R^3,\R^3),A_2\in L^{q^*}(\R^3,\R^3)\},
	\end{align*}
	where $\cM(\R^3,\R^3)$ stands for the space of measurable vector fields in $\R^3$ and $q^*:=3q/(3-q)$.\\
	For any $A\in\cL$ we consider the following norm
	\begin{equation*}
		|A|_{6,q^*}:=\inf\{|A_1|_6 + |A_2|_{q^*}:A=A_1+A_2,\; A_1\in L^6(\R^3,\R^3),A_2\in L^{q^*}(\R^3,\R^3)\}.
	\end{equation*}
	We recall that $\cL$ stands for the Orlicz space with the $N$-function 
	$$t\mapsto \int_0^{|t|}\min\{s^5,s^{q^*-1}\}\,ds,$$
	and, since $q,q^*>1$, $\cL$ is reflexive (see \cite{Rao,BadPisRol}).

	Let now $G := \cS\cO(2)\times{1}\subset \cO(3)$. 
	We can define an action of $G$ on $\cL$ by setting
	\begin{equation}
		\label{actionG}
		(g*A)(x) := g\cdot A(g^{-1}x),
		\quad
		g \in G,\
		A\in \cL.
	\end{equation}
	Let $\cL_G$ be the set of fixed points in $\cL$ with respect to the action \eqref{actionG}, i.e. $A\in\cL_G$ provided that $g\ast A=A$.\\
	In the spirit of \cite{BenForAzzAprile}, we have the following decomposition property. Here the assumption $q>6/5$ plays a crucial role.
	\begin{Prop}\label{prop:decomposition}
		Let $A\in \cL_G$. There is a unique decomposition 
		$$A=A_\tau+A_\rho+A_\zeta$$ with summands of the form
		\begin{equation}\label{eq:summands}
			A_\tau(x)
			= \al(r,x_3)(-x_2,x_1,0),\
			A_\rho(x)
			= \be(r,x_3)(x_1,x_2,0),\
			A_\zeta(x)
			= \ga(r,x_3)(0,0,1),
		\end{equation}
		where $\alpha,\beta,\gamma:(0,+\infty)\times\R\to\R$ such that
		$A_\tau,A_\rho,A_\zeta\in \cL_G$.\\
		Moreover, if, in addition, $q\in(6/5,2)$  and  $\nabla A\in L^1_{loc}(\R^3,\R^3)$, then $\nabla A_\tau, \nabla  A_\rho, \nabla A_\zeta \in L^1_{loc}(\R^3,\R^3)$ and
		\begin{equation}
			\label{curlort}
			\curlop  A_\rho\cdot\curlop A_\tau
			=\curlop A_\tau\cdot\curlop A_\zeta
			= 0
			\quad
			\hbox{a.e. in }\R^3.
		\end{equation}
	\end{Prop}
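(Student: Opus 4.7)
The plan proceeds in three parts. First, for the pointwise decomposition and uniqueness, I introduce the orthonormal local frame $e_r(x) := (x_1, x_2, 0)/r$, $e_\theta(x) := (-x_2, x_1, 0)/r$, $e_z := (0, 0, 1)$ on $\{r > 0\}$, so that every $A \in \cL$ decomposes pointwise a.e. as $A = (A \cdot e_r) e_r + (A \cdot e_\theta) e_\theta + (A \cdot e_z) e_z$. When $A \in \cL_G$, the $G$-invariance forces each coefficient $A \cdot e_i$ to depend only on $(r, x_3)$; setting $\beta(r, x_3) := (A \cdot e_r)/r$, $\alpha(r, x_3) := (A \cdot e_\theta)/r$, $\gamma(r, x_3) := A \cdot e_z$ recovers the summands in \eqref{eq:summands}. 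Uniqueness is immediate from the linear independence of the three frame vectors for $r > 0$. Each piece satisfies $|A_\tau|, |A_\rho|, |A_\zeta| \leq |A|$ pointwise, so by monotonicity of the Luxemburg norm on $\cL$ (reflexive for $q > 6/5$), each summand lies in $\cL$ and is tautologically $G$-invariant.

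Second, under $\nabla A \in L^1_{loc}$, I need $\nabla A_\tau, \nabla A_\rho, \nabla A_\zeta \in L^1_{loc}$. The key tool is the generator of rotations $\partial_\theta := x_1 \partial_{x_2} - x_2 \partial_{x_1}$, for which Cauchy--Schwarz immediately yields $|\partial_\theta A| \leq r|\nabla A|$ a.e. On the other hand, expanding $A = r\beta\, e_r + r\alpha\, e_\theta + \gamma\, e_z$ in the moving frame and using $\partial_\theta e_r = e_\theta$, $\partial_\theta e_\theta = -e_r$, $\partial_\theta e_z = 0$ combined with the $G$-invariance of $\alpha, \beta, \gamma$, one gets $\partial_\theta A = r\beta\, e_\theta - r\alpha\, e_r$, whence $|\alpha|, |\beta| \leq |\nabla A|$ a.e. Since $r\alpha$, $r\beta$, and $\gamma$ coincide (up to the $G$-action) with Cartesian components of $A$, their cylindrical derivatives are dominated by $|\nabla A|$; together with the identity $r\partial_r \alpha = \partial_r(r\alpha) - \alpha$ this gives $r|\nabla_{(r,x_3)}\alpha| \lesssim |\nabla A|$. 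Substituting into
$$\nabla A_\tau = \nabla\alpha \otimes (-x_2, x_1, 0) + \alpha\, \nabla(-x_2, x_1, 0),$$
and using $|(-x_2, x_1, 0)| = r$ together with the boundedness of $\nabla(-x_2, x_1, 0)$, I obtain $|\nabla A_\tau| \lesssim |\nabla A|$ a.e., hence $\nabla A_\tau \in L^1_{loc}$. The arguments for $A_\rho$ and $A_\zeta$ are completely analogous, the field $A_\zeta$ being simpler since its auxiliary factor $(0,0,1)$ has vanishing gradient.

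Third, for the curl orthogonality \eqref{curlort} I appeal to the explicit cylindrical-coordinate formulas. Writing $u := r\alpha$, $v := r\beta$, $w := \gamma$, the axisymmetric curl formulas give
$$\curlop A_\tau = -\partial_{x_3} u \, e_r + \tfrac{1}{r}\partial_r(ru)\, e_z, \qquad \curlop A_\rho = \partial_{x_3} v \, e_\theta, \qquad \curlop A_\zeta = -\partial_r w \, e_\theta.$$
Thus $\curlop A_\tau$ has components only along $\{e_r, e_z\}$ while $\curlop A_\rho$ and $\curlop A_\zeta$ lie entirely along $e_\theta$. Since $e_\theta \perp e_r$ and $e_\theta \perp e_z$ pointwise, both dot products $\curlop A_\rho \cdot \curlop A_\tau$ and $\curlop A_\tau \cdot \curlop A_\zeta$ vanish a.e.

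The main technical difficulty is the derivative estimate in the second step: the non-smoothness of the frame $(e_r, e_\theta)$ at the axis $\{r = 0\}$ would naively produce $1/r$-singularities in $\nabla A_\tau$. What rescues the argument is the $G$-invariance, which through the $\partial_\theta$-identity constrains $\alpha, \beta$ pointwise by $|\nabla A|$, turning the apparently singular contributions into ones controlled by $|\nabla A|$ alone.
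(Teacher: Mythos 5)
Your first and third steps are fine and essentially match the paper: the pointwise projection onto the orthogonal frame $(-x_2,x_1,0)$, $(x_1,x_2,0)$, $(0,0,1)$, membership of each summand in $\cL$ (the paper projects the two pieces of $A=A_1+A_2$, you use solidity of the Orlicz norm — either works), and the cylindrical curl formulas giving \eqref{curlort}. The gap is in your second step. Your frame computation only controls the \emph{pointwise} gradient of $A_\tau,A_\rho,A_\zeta$ on $\R^3\setminus\Sigma$, where $\Sigma$ is the $x_3$-axis (this is what the paper gets directly from the identity $|\nabla A|^2=|\nabla A_\tau|^2+|\nabla A_\rho|^2+|\nabla A_\zeta|^2$). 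From $|\nabla A_\tau|\le|\nabla A|$ a.e.\ off the axis you conclude ``hence $\nabla A_\tau\in L^1_{loc}(\R^3)$'', but the conclusion of the proposition concerns the \emph{distributional} gradient on all of $\R^3$: a priori $DA_\tau$ could differ from the a.e.-defined gradient by a singular distribution supported on $\Sigma$ (the codimension-one analogue is the Heaviside function, whose pointwise gradient off the jump set is $0\in L^1_{loc}$ while $Du$ is a surface measure). Nothing in your argument excludes such a singular part, and this is exactly the point the paper flags and proves: it tests against $B_\eps=\eta_\eps B$ with $\eta_\eps$ vanishing in the tube $\{r\le\eps\}$ and shows the commutator terms vanish, using the estimate $\frac1\eps\int_{\{r\le\eps,\,|x_3|\le R\}}|A_0|\,dx\lesssim \eps^{2/3}|A_1|_6+\eps^{(5q-6)/(3q)}|A_2|_{q^*}$ (formula \eqref{eq:etaestim}).

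This is also where the hypothesis $q\in(6/5,2)$ actually enters: one needs $q^*=3q/(3-q)>2$, i.e.\ $q>6/5$, so that the exponent $(5q-6)/(3q)$ is positive and the axis contribution disappears as $\eps\to0^+$. Your proposal never uses $q>6/5$ in this step; you invoke it only for ``reflexivity'' of $\cL$ in step one, which is both unnecessary there (the first part of the proposition holds for all $q$, and $\cL$ is reflexive for every $q>1$) and unrelated to the real difficulty. Likewise, your closing remark that the $1/r$-singularity of the frame is the main obstacle, cured by $G$-invariance, misses the point: the pointwise bound near the axis is not the issue; the issue is ruling out a distributional contribution concentrated \emph{on} the axis. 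To repair the proof you must either reproduce the paper's cutoff argument (or an equivalent removability argument for the set $\Sigma$), making explicit where the integrability $A\in L^6+L^{q^*}$ with $q^*>2$ is used.
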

	\begin{proof}
		Let 
		$$\Sigma:=\left\{(x_1,x_2,x_3)\in \mathbb{R}^3 : x_1=x_2=0 \right\}.$$
		For any $x\in\R^3\setminus\Sigma$, we define $A_\tau(x),A_\rho(x)$ and $A_\zeta(x)$ as projections of the vector $A(x)$ in $\R^3$ along orthogonal directions $(-x_2,x_1,0)$, $(x_1,x_2,0)$ and $(0,0,1)$, so that
		\begin{equation}\label{AAtArAz}
			|A (x)|^2=|A_\tau(x)|^2+|A_\rho(x)|^2+|A_\zeta(x)|^2\quad\hbox{for a.e. }x\in\R^3.
		\end{equation}
		Since $A\in\cL$, so that $A=A_1+A_2$  for some $A_1\in L^6(\R^3,\R^3)$ and $A_2\in L^{q^*}(\R^3,\R^3)$, then, considering the projections of the vector $A_1$ and $A_2$ along the orthogonal directions $(-x_2,x_1,0)$, $(x_1,x_2,0)$ and $(0,0,1)$, and using \eqref{AAtArAz}, we get that $A_\tau,A_\rho,A_\zeta\in \cL$. Moreover, straightforward calculations show that \eqref{eq:summands} holds and that $A_\tau,A_\rho,A_\zeta$ are fixed points for the action \eqref{actionG}.\\
		Suppose now that, in addiction, $\nabla A\in L^1_{loc}(\R^3,\R^3)$. Direct computations show (cf. \cite[Lemma 1]{BenForAzzAprile})
		\begin{equation}\label{eq:nablasquare}
			|\nabla A |^2=|\nabla A_\tau|^2+|\nabla  A_\rho|^2+|\nabla A_\zeta|^2\quad\hbox{ a.e. in }\R^3.
		\end{equation}
		Then $\nabla A_\tau, \nabla  A_\rho, \nabla A_\zeta \in L^1_{loc}(\R^3\setminus\Sigma,\R^3)$, however, it is not immediately obvious that they belong to $L^1_{\mathrm{loc}}(\mathbb{R}^3, \mathbb{R}^3)$ due to possible singularities on $\Sigma$.\\
		Now, let $A_0$ be one of the components $A_\tau,A_\rho,A_\zeta$. To prove that $\nabla A_\tau, \nabla  A_\rho, \nabla A_\zeta \in L^1_{loc}(\R^3,\R^3)$, we show that $\frac{\partial A_0}{\partial x_i}|_{\R^3\setminus\Sigma}$ actually coincides with the distributional derivative of $A_0$ in the whole $\R^3$, namely that, for every $B\in\cC_0^{\infty}(\R^3,\R^3)$,
		\[
		\int_{\R^3}\left.\frac{\partial A_0}{\partial x_i}\right|_{\R^3\setminus\Sigma}B \,dx
		=-\int_{\R^3}A_0\frac{\partial B}{\partial x_i}\,dx.
		\]
		Thus, let us  set $A_0=A_1+A_2$ for some $A_1\in L^6(\R^3,\R^3)$ and $A_2\in L^{q^*}(\R^3,\R^3)$. Take any $B\in\cC_0^{\infty}(\R^3,\R^3)$ and $R>0$ such that $B=0$ for $|x|\geq R$. Observe that there is a constant $C>0$ such that, for any $\eps>0$,
		\begin{equation}\label{eq:etaestim}
			\begin{split}
				\frac1\eps\int_{r\leq\eps,|x_3|\leq R}|A_0|\,dx
				& 	\leq \frac1\eps\int_{r\leq\eps,|x_3|\leq R}|A_1|\,dx
				+\frac1\eps\int_{r\leq\eps,|x_3|\leq R}|A_2|\,dx\\
				& \leq
				C\Big(\eps^{\frac23}|A_1|_6+\eps^{\frac{5q-6}{3q}}|A_2|_{q^*}\Big).
			\end{split}
		\end{equation}
		Now, taking a smooth function $\eta_\eps\in\cC^\infty([0,\infty),[0,1])$ such that $\eta_\eps=0$ for $r\leq\eps/2$, $\eta_\eps=1$ for $r\geq\eps$ and $\eta_\eps'(r)\leq 4/\eps$ and setting $B_\eps(x):=\eta_\eps(r) B(x)$, we have that
		\begin{equation}
			\label{last}
			\int_{\R^3}A_0\frac{\partial B_\eps}{\partial x_i}\,dx
			=\int_{\R^3}\eta_\eps A_0\frac{\partial B}{\partial x_i}\,dx
			+\int_{\R^3}A_0B\frac{\partial \eta_\eps}{\partial x_i}\,dx.
		\end{equation}
		Then
		\[
		\left|  \int_{\R^3}A_0\frac{\partial B_\eps}{\partial x_i}\,dx-\int_{\R^3}A_0\frac{\partial B}{\partial x_i}\,dx \right|
		\leq C \Big(\int_{r\leq\eps,|x_3|\leq R}|\eta_\eps - 1| |A_0|\,dx +\frac1\eps\int_{r\leq\eps,|x_3|\leq R}|A_0|\,dx\Big)
		\]
		and, since
		\begin{align*}
			\int_{r\leq\eps,|x_3|\leq R}|\eta_\eps - 1| |A_0|\,dx
			&\leq
			\int_{r\leq\eps,|x_3|\leq R}|\eta_\eps - 1| |A_1|\,dx
			+\int_{r\leq\eps,|x_3|\leq R}|\eta_\eps - 1| |A_2|\,dx\\
			&\leq
			\Big(\int_{r\leq\eps,|x_3|\leq R}|\eta_\eps - 1|^\frac{6}{5} \,dx\Big)^\frac{5}{6} |A_1|_6\\
			&\qquad
			+ \Big(\int_{r\leq\eps,|x_3|\leq R}|\eta_\eps - 1|^\frac{3q}{4q-3}\,dx\Big)^\frac{4q-3}{3q} |A_2|_{q^*}\\
			&\leq
			C \big( \eps^\frac{5}{3} |A_1|_6
			+ \eps^\frac{2(4q-3)}{3q} |A_2|_{q^*}  \big),
		\end{align*}
		in view of \eqref{eq:etaestim} we infer that 
		$$\lim_{\eps\to 0^+}\int_{\R^3}A_0\frac{\partial B_\eps}{\partial x_i}\,dx=\int_{\R^3}A_0\frac{\partial B}{\partial x_i}\,dx \in\R.$$
		On the other hand, by Lebesgue's Theorem
		\[
		\lim_{\varepsilon\to 0^+}\int_{\R^3}\eta_\eps A_0\frac{\partial B}{\partial x_i}\,dx
		= \int_{\R^3} A_0\frac{\partial B}{\partial x_i}\,dx
		\]
		and an easy computation shows that
		\[
		\lim_{\varepsilon\to 0^+}\int_{\R^3}A_0B\frac{\partial \eta_\eps}{\partial x_i}\,dx=0
		\]
		(see \cite[formula (26)]{BenForAzzAprile}). Thus, by \eqref{last}, we can conclude.
		\\
		Finally, \eqref{curlort} follows from direct computations.
	\end{proof}
	
	Let us introduce now the following space
	\[
	\cD:=\{A\in\cL: A=\widehat{A}+\widetilde{A},\;\widehat{A}\in \cD^{1,2}(\R^3,\R^3),\widetilde{A}\in \cD^{1,q}(\R^3,\R^3)\},
	\]
	equipped with the norm
	$$\|A\|_\cD:=\inf\{|\nabla \widehat{A}|_2 + |\nabla \widetilde{A}|_q:A=\widehat{A}+\widetilde{A},\;\widehat{A}\in \cD^{1,2}(\R^3,\R^3),\widetilde{A}\in \cD^{1,q}(\R^3,\R^3)\},$$
	where $\D^{1,k}(\R^3,\R^3)$ the completion of $\cC_0^{\infty}(\R^3,\R^3)$ with respect to the norm $|\nabla \cdot |_k$.\\
	Clearly, there is a continuous embedding of $\cD$ into $\cL$ due to the classical Sobolev embedding of $\D^{1,k}(\R^3,\R^3)$ into $L^{k^*}(\R^3,\R^3)$ with $k^*=3k/(3-k)$.

	Let
	$\cD_G$ be the set of fixed points in $\cD$  with respect to the action \eqref{actionG}. In view of Proposition \ref{prop:decomposition}, we observe that
	any $A\in \D_G$ has a unique decomposition 
	$$A=A_\tau+A_\rho+A_\zeta$$ with summands of the form \eqref{eq:summands}. Moreover, since the elements of $\cD$ have gradient in $L_{loc}^1(\R^3,\R^3)$, using also \eqref{eq:nablasquare}, we get that $A_\tau,A_\rho,A_\zeta\in\cD_G$.
	
	Therefore we define
	\begin{equation}\label{actionS}
		S:\D_G\to \D_G, \quad S(A_\tau+A_\rho+A_\zeta) := A_\tau-A_\rho-A_\zeta,
	\end{equation}
	which,  taking into account \eqref{eq:nablasquare}, is a linear isometry and $S^2=\id$,
	and let
	\begin{equation*}\label{eq:A^2}
		\cA:=
		\big\{A\in \D_G:SA=A\big\}.
	\end{equation*}
	Observe that, if $A\in\cA$, then $A_\rho+A_\zeta=0$.
	
	On $\cA$ we can prove the following result.
	\begin{Lem} If $q\in(6/5,2)$, then for any $A\in\cA$, $\div A=0$ in the sense of distributions and
		$$\|A\|=\inf\big\{|\curlop \widehat{A}|_2 + |\curlop \widetilde{A}|_q:\; A=\widehat{A}+\widetilde{A},\;\widehat{A}\in \cD^{1,2}(\R^3,\R^3),\widetilde{A}\in \cD^{1,q}(\R^3,\R^3)\big\},$$
		defines an equivalent norm in $\cA$.
	\end{Lem}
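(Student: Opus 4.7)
My plan is to handle the two assertions in turn. For the divergence-free property, let $A\in\cA$. Since $\cA\subset\cD_G\subset\cL_G$ and any element of $\cD$ has gradient in $L^1_{loc}(\R^3,\R^3)$, the hypotheses of Proposition \ref{prop:decomposition} are met and yield the decomposition $A=A_\tau+A_\rho+A_\zeta$ with each summand having gradient in $L^1_{loc}(\R^3,\R^3)$. The constraint $SA=A$ forces $A_\rho+A_\zeta=0$ (as observed in the text preceding the lemma), and the orthogonality of the directions $(x_1,x_2,0)$ and $(0,0,1)$ then gives $A_\rho=A_\zeta=0$, so $A=A_\tau$ with $A_\tau(x)=\alpha(r,x_3)(-x_2,x_1,0)$. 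A direct pointwise calculation for $x\in\R^3\setminus\Sigma$, using $\partial_{x_1}r=x_1/r$ and $\partial_{x_2}r=x_2/r$, produces
$$\div A_\tau=\partial_{x_1}\bigl(-x_2\,\alpha(r,x_3)\bigr)+\partial_{x_2}\bigl(x_1\,\alpha(r,x_3)\bigr)=-\frac{x_1 x_2}{r}\partial_r\alpha+\frac{x_1 x_2}{r}\partial_r\alpha=0.$$
Since $|\Sigma|=0$ and $\div A\in L^1_{loc}(\R^3)$, this pointwise identity transfers to the distributional divergence, which is the first claim.

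For the norm equivalence, one direction is immediate from the pointwise bound $|\curlop B|\le\sqrt{2}\,|\nabla B|$, giving $\|A\|\le\sqrt{2}\,\|A\|_\cD$. For the reverse direction the plan is to introduce the Leray--Helmholtz projector $\mathbb{P}:=\id-\nabla\Delta^{-1}\div$; in components $(\mathbb{P}B)_i=B_i+R_i R_j B_j$, so $\mathbb{P}$ is a linear combination of the identity and double Riesz transforms. By Calder\'on--Zygmund theory the Riesz transforms are bounded on $L^k(\R^3)$ for every $k\in(1,\infty)$, and since they commute with the partial derivatives, $\mathbb{P}$ extends to a bounded operator on $\cD^{1,k}(\R^3,\R^3)$ for $k\in\{2,q\}$, noting that $q\in(6/5,2)\subset(1,\infty)$. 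Two algebraic facts will be exploited: $\curlop\mathbb{P}B=\curlop B$ for every $B$ (since $\curlop\nabla\equiv 0$), and $\mathbb{P}A=A$ by the first part of the lemma.

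Given $\eps>0$ and any decomposition $A=\widehat A+\widetilde A$ with $\widehat A\in\cD^{1,2}(\R^3,\R^3)$ and $\widetilde A\in\cD^{1,q}(\R^3,\R^3)$ that is $\eps$-close to the infimum in $\|A\|$, the projection gives $A=\mathbb{P}\widehat A+\mathbb{P}\widetilde A$, a decomposition in the same spaces whose summands are divergence-free. For any divergence-free $B\in\cD^{1,k}(\R^3,\R^3)$, the vector identity $-\Delta B=\curlop\curlop B$ rewrites $\partial_i B_j$ as an explicit combination $\sum_{k,\ell}\varepsilon_{jk\ell}R_iR_k(\curlop B)_\ell$ of Riesz transforms of components of $\curlop B$; Calder\'on--Zygmund then yields $|\nabla B|_k\le C_k|\curlop B|_k$. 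Applying this to $\mathbb{P}\widehat A$ and $\mathbb{P}\widetilde A$ and invoking $\curlop\mathbb{P}=\curlop$,
$$|\nabla\mathbb{P}\widehat A|_2+|\nabla\mathbb{P}\widetilde A|_q\le C\bigl(|\curlop\widehat A|_2+|\curlop\widetilde A|_q\bigr),$$
and taking the infimum over decompositions produces $\|A\|_\cD\le C\|A\|$. The main technical burden I anticipate is the careful verification that $\mathbb{P}$ and the representation of $\nabla B$ through Riesz transforms really do extend to the nonreflexive sum space $\cD^{1,2}+\cD^{1,q}$ componentwise; this ultimately rests on $k>1$, while the stronger lower bound $q>6/5$ has already been consumed in Proposition \ref{prop:decomposition} to secure the $L^1_{loc}$ regularity used at the start.
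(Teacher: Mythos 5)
Your first part and the easy inequality are fine and essentially reproduce the paper's argument: the constraint $SA=A$ reduces $A$ to $A_\tau$, whose divergence vanishes pointwise off $\Sigma$, and the $L^1_{loc}$ regularity of $\nabla A_\tau$ from Proposition \ref{prop:decomposition} lets this pass to distributions; likewise $|\curlop A|\le\sqrt2\,|\nabla A|$ gives $\|A\|\le C\|A\|_\cD$. For the reverse inequality your Calder\'on--Zygmund ingredients (boundedness of Riesz transforms on $\cD^{1,k}$, and $|\nabla B|_k\le C_k|\curlop B|_k$ for divergence-free $B$) are legitimate and are, in substance, what the Helmholtz-decomposition lemmas cited by the paper (\cite{MedSz,MederskiSob_curl}) encapsulate, so the overall route is viable.

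The gap is the step ``$\mathbb{P}A=A$ by the first part of the lemma.'' On the sum space this is not a formal consequence of $\div A=0$: the projector is only defined summand-wise, neither $\widehat A$ nor $\widetilde A$ is divergence-free, and no approximation of $A$ by divergence-free test fields in the sum norm is available a priori. What you get for free is only that $QA:=A-\mathbb{P}\widehat A-\mathbb{P}\widetilde A$ satisfies $\curlop(QA)=0$ (since $\curlop\mathbb{P}=\curlop$) and $\div(QA)=\div A=0$ (since $\div\mathbb{P}=0$), hence $QA$ is a vector field with harmonic components lying in $L^6(\R^3,\R^3)+L^{q^*}(\R^3,\R^3)$. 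That such a field must vanish is a Liouville-type statement, and it is exactly the nontrivial core of the paper's proof: there the curl-free parts are written as $\nabla\widehat\vp+\nabla\widetilde\vp$, shown to be harmonic, and killed via the mean-value formula, using that $R^{-3}\big(R^{5/2}|\widehat w|_6+R^{(4q-3)/q}|\widetilde w|_{q^*}\big)\to0$ as $R\to+\infty$. You need this argument (or an equivalent duality/cutoff computation showing $\sum_j R_jA_j=0$, which itself requires justifying integration by parts against the non-compactly supported kernel of $(-\Delta)^{-1/2}$) to close the chain $\|A\|_\cD\le|\nabla\mathbb{P}\widehat A|_2+|\nabla\mathbb{P}\widetilde A|_q\le C\big(|\curlop\widehat A|_2+|\curlop\widetilde A|_q\big)$. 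Note also that the ``main technical burden'' you flag -- extending $\mathbb{P}$ and the Riesz representation to the sum space -- is the routine part; the missing piece is precisely this Liouville step, which is where the paper spends its effort.
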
	
	\begin{proof}
		Let $A\in\cA$. Since $\div A_\tau =0$ in the sense of distributions, then it is clear that $\div A=0$.
		Moreover, since $|\nabla\times A_\tau|^2 \leq 2 |\nabla A_\tau|^2$ a.e. in $\R^3$, then $\|A\|\leq C \|A\|_\cD$.\\	 
		Now, let $\widehat{A}\in \cD^{1,2}(\R^3,\R^3)$, $\widetilde{A}\in \cD^{1,q}(\R^3,\R^3)$ be such that $A=\widehat{A}+\widetilde{A}$.\\
		In view of the Helmholz decomposition \cite[Lemma 2.2]{MedSz}, $\widehat{A}=\widehat{v}+\widehat{w}$ for some unique $\widehat{v},\widehat{w}\in \big\{u\in L^{6}(\R^3,\R^3): \curlop u\in L^2(\R^3,\R^3)\big\}$ such that $\div \widehat{v}=0$ and $\curlop \widehat{w}=0$.\\
		In a similar way we find the Helmholz decomposition \cite{MederskiSob_curl}	of $\widetilde{A}=\widetilde{v}+\widetilde{w}$ for some unique $\widetilde{v},\widetilde{w}\in \big\{u\in L^{q^*}(\R^3,\R^3): \curlop u\in L^q(\R^3,\R^3)\big\}$
		such that $\div \widetilde{v}=0$ and $\curlop \widetilde{w}=0$. Moreover $|\curlop \widetilde{v}|_q\geq C|\nabla \widetilde{v}|_q$.
		\\
		Observe that
		\begin{align*}
			|\curlop \widehat{A}|_2^2
			&=
			|\curlop \widehat{v}|_2^2
			=|\nabla \widehat{v}|_2^2,\\
			|\curlop \widetilde{A}|_q^q
			&=
			|\curlop \widetilde{v}|^q
			\geq C |\nabla \widetilde{v}|_q^q, 
		\end{align*}
		and so
		\begin{equation}\label{eq:ineqAv}
			|\curlop \widehat{A}|_2 + |\curlop \widetilde{A}|_q\geq |\nabla \widehat{v}|_2 + C|\nabla \widetilde{v}|_q\geq \min\{1,C\}\|\widehat{v}+\widetilde{v}\|_\cD.
		\end{equation}
		Again by \cite{MedSz,MederskiSob_curl}, $\widehat{w}=\nabla \widehat{\vp}$ and $\widetilde{w}=\nabla \widetilde{\vp}$ 	for some $\widehat{\vp}\in\cD^{1,6}(\R^3)$ and $\widetilde{\vp}\in\cD^{1,q^*}(\R^3)$. Since $\div A=0$, we get
		$$0=\div(\widehat{w}+\widetilde{w})=\Delta(\widehat{\vp}+\widetilde{\vp}).$$
		Therefore, $\widehat{\vp}+\widetilde{\vp}$ is a harmonic function, so are all components of $\widehat{w}+\widetilde{w}=\nabla(\widehat{\vp}+\widetilde{\vp})$. 
		By the mean-value formula, we infer that 
		$$|\widehat{w}(y)+\widetilde{w}(y)|\leq \frac{C}{R^3}\int_{B(y,R)}|\widehat{w}|+|\widetilde{w}|\,dx\leq \frac{C}{R^3}\big(R^{\frac{5}{2}}|\widehat{w}|_6+R^{\frac{4q-3}{q}}|\widetilde{w}|_{q^*}\big)$$
		for any $y\in\R^3$ and $R>0$. Letting $R\to+\infty$ we get that $\widehat{w}+\widetilde{w}=0$. Hence,
		$A=\widehat{v}+\widetilde{v}$ and by \eqref{eq:ineqAv} we obtain that $\|A\|\geq \min\{1,C\}\|A\|_\cD$. Thus, we obtain the desired equivalence of the norms in $\cA$.
	\end{proof}

	Now we give the following properties on the functional $\cJ$.
	
	\begin{Lem}\label{LemJC^1}
		Let $J\in\cA^*$. Then $\cJ:\cA\to\R$ is a strictly convex, continuous, coercive, and Fr\'echet  differentiable functional. Moreover, for any $A,B\in\cA$,
		\begin{equation*}%\label{eq:DevFormula}
			\cJ'(A)[B]=\int_{\R^3} \left(\frac{\curlop A}{\big(1+|\curlop A|^2\big)^{1-q/2}} \cdot \curlop B\right) \,dx
			- \langle J,B\rangle,
		\end{equation*}
		where  $\langle\ , \ \rangle$ in \eqref{eq:functionalBI} denotes the duality pairing between $\cA^*$ and $\cA$, and $\J'(A)\in \cA^*$.
	\end{Lem}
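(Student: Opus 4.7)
The plan is to verify all five assertions by combining pointwise estimates on the integrand $F(\xi):=\tfrac{1}{q}\bigl[(1+|\xi|^2)^{q/2}-1\bigr]$ with the equivalent curl-based norm on $\cA$ from the previous lemma. A direct Hessian calculation shows $F$ is strictly convex on $\R^3$ and $C^1$ with $\nabla F(\xi)=\xi/(1+|\xi|^2)^{1-q/2}$; standard bookkeeping yields the envelopes
$$c_1\min\{|\xi|^2,|\xi|^q\}\le F(\xi)\le C_1\bigl(|\xi|^2+|\xi|^q\bigr),\qquad |\nabla F(\xi)|\le C_2\min\{|\xi|,|\xi|^{q-1}\}.$$

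Strict convexity of $\cJ$ follows from pointwise convexity of $F$ together with the injectivity of $\curlop$ on $\cA$: distinct $A_1,A_2\in\cA$ force $\curlop A_1\neq\curlop A_2$ on a set of positive measure, since $\curlop(A_1-A_2)=0$ combined with $\div(A_1-A_2)=0$ and the Helmholtz/mean-value argument from the proof of the preceding lemma gives $A_1=A_2$. Continuity is obtained by extracting an a.e. convergent subsequence from any $A_n\to A$ in $\cA$ (via an $L^2+L^q$ decomposition of $\curlop A_n$) and applying Vitali's theorem with the upper envelope as a uniform majorant. For coercivity, the lower envelope converts $\int F(\curlop A)\,dx$ into an Orlicz-type control that, through the equivalent norm, dominates a superlinear function of $\|A\|$ and hence beats the linear term $|\langle J,A\rangle|\le\|J\|_{\cA^*}\|A\|$.

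For Fréchet differentiability and $\cJ'(A)\in\cA^*$, I would first observe that the gradient envelope places $\nabla F(\curlop A)$ in the sum $L^2+L^{q'}$ after splitting along the level set $\{|\curlop A|\le 1\}$: on this set $|\nabla F(\curlop A)|\le C|\curlop A|$ is $L^2$-integrable because $\int F(\curlop A)<\infty$, while on its complement $|\nabla F(\curlop A)|\le C|\curlop A|^{q-1}$ lies in $L^{q'}$ with $q'=q/(q-1)$. Since $\curlop B\in L^2+L^q$ by the equivalent norm and $(L^2+L^q)^\ast$ pairs naturally with $L^2\cap L^{q'}$, the linear form $B\mapsto\int\nabla F(\curlop A)\cdot\curlop B\,dx$ is bounded on $\cA$. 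Writing the remainder as
$$\cJ(A+B)-\cJ(A)-\int_{\R^3}\nabla F(\curlop A)\cdot\curlop B\,dx+\langle J,B\rangle=\int_0^1\!\!\int_{\R^3}\bigl[\nabla F(\curlop A+t\curlop B)-\nabla F(\curlop A)\bigr]\cdot\curlop B\,dx\,dt,$$
the $o(\|B\|)$ bound follows from continuity of $\nabla F$ together with a dominated convergence argument using the envelope as majorant, which also yields norm-continuity of $A\mapsto\cJ'(A)$.

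The main obstacle I anticipate is the coercivity step, where the natural control furnished by $F$ is of Young-function type on $|\curlop A|$, and extracting from it a lower bound on the infimum-decomposition norm $\|A\|$ requires the splitting at level $|\curlop A|=1$ to be compatible with the Helmholtz decomposition underlying the equivalent-norm lemma; this compatibility is precisely what the threshold $q>6/5$ buys us. A secondary subtlety is the uniform-in-$t$ integrability of the Fréchet remainder integrand, but this is absorbed by the envelope $|\nabla F(\curlop A+t\curlop B)|\le C\min\{|\curlop A|+|\curlop B|,(|\curlop A|+|\curlop B|)^{q-1}\}$.
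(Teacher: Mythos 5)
Your proposal is correct in substance and follows the same overall skeleton as the paper (envelope bounds $c\min\{t^2,t^q\}\le f(t)\le\min\{t^2,t^q\}$, $|f'(t)|/q\le\min\{t,t^{q-1}\}$, convexity of the integrand, coercivity from the lower envelope in the sum space $L^2+L^q$), but it diverges in one genuine way: where the paper disposes of well-definedness, continuity and Fr\'echet differentiability in one stroke by citing the $\cC^1$-regularity of the Nemytskii operator on $L^2(\R^3,\R^3)+L^q(\R^3,\R^3)$ (Proposition 3.8 of Badiale--Pisani--Rolando), you verify differentiability by hand via the remainder identity and the duality $(L^2+L^q)^*=L^2\cap L^{q'}$. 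That is a legitimate and more self-contained route, and your extra observation that strict convexity of $\cJ$ also needs injectivity of $\curlop$ on $\cA$ (via $\curlop(A_1-A_2)=0$, $\div(A_1-A_2)=0$ and the harmonic/mean-value argument) fills in a point the paper leaves implicit. Two places need tightening. First, for the boundedness of $B\mapsto\int\nabla F(\curlop A)\cdot\curlop B\,dx$ you need $\nabla F(\curlop A)\in L^2\cap L^{q'}$, not the sum $L^2+L^{q'}$ that your level-set splitting literally produces; the intersection does hold (on $\{|\curlop A|>1\}$ one has $|\curlop A|^{2(q-1)}\le|\curlop A|^q$, and on $\{|\curlop A|\le 1\}$ one has $|\curlop A|^{q'}\le|\curlop A|^2$ since $q'>2$), but you must say this, and correspondingly the $o(\|B\|)$ estimate should be run by H\"older in the dual pairing, i.e. $\|\nabla F(\curlop A+t\curlop B)-\nabla F(\curlop A)\|_{L^2\cap L^{q'}}\cdot\|\curlop B\|_{L^2+L^q}$, with dominated convergence applied to the $L^2\cap L^{q'}$-norm of the difference; a bare dominated-convergence argument on the remainder integrand fails because $|\curlop B|/\|B\|$ admits no integrable majorant. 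Second, your diagnosis of the role of $q>6/5$ in coercivity is misplaced: the splitting at level $|\curlop A|=1$ is compatible with the sum-space norm for every $q\in(1,2)$ (this is the paper's appeal to Proposition 2.13 of Badiale--Pisani--Rolando, which you should either cite or prove), whereas the restriction $q>6/5$ enters earlier, in Proposition \ref{prop:decomposition} (integrability near the axis $\Sigma$, through the exponent $(5q-6)/(3q)$), i.e. in the very construction of $\cA$ and of the equivalent curl-norm, not in the coercivity estimate itself.
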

	\begin{proof}
		Firstly we show that $\cJ$ is of class $\cC^1$ on $\cA$.
		Let $f:[0,+\infty)\to \R$, $f(t)=(1+t^2)^{q/2}-1$. Observe that there is a constant $c\in (0,1)$ such that, for any $t\geq 0$,
		\begin{equation}\label{eq:t2qest}
			c\min\{t^2,t^q\}\leq f(t) \leq \min\{t^2,t^q\}
		\end{equation}
		and
		\[
		\frac1q f'(t)=\frac{t}{(1+t^2)^{1-q/2}} \leq \min\{t,t^{q-1}\}.
		\]
		In view of \cite[Proposition 3.8]{BadPisRol}, the operator
		$$\cF:L^2(\R^3,\R^3)+L^q(\R^3,\R^3)\to L^1(\R^3)$$
		given by
		$$\cF(u)(x):=\frac1q f(|u(x)|),\quad\hbox{for }u\in L^2(\R^3,\R^3)+L^q(\R^3,\R^3),\; x\in\R^3$$
		is well-defined and of class $\cC^1$, with Fr\'echet derivative $\cF'(u)$ given by
		$$\cF'(u)[h]=\frac{u}{(1+|u|^2)^{1-q/2}}\cdot h,\quad\hbox{for }u,h\in L^2(\R^3,\R^3)+L^q(\R^3,\R^3).$$
		Observe that, if $A\in\cA$, then  $\curlop A\in L^2(\R^3,\R^3)+L^q(\R^3,\R^3)$, and then  $\cJ$ is well-defined and of class $\cC^1$ on $\cA$. Now
		it remains to prove the strict convexity and the coercivity of $\cJ$.\\
		Since $f''(t)>0$ for $t\geq 0$, $\cJ$ is strictly convex.\\
		Now, we derive the necessary lower bounds to ensure coercivity.
		Let
		$\{A_n\}\subset\cA$ and $\|A_n\|\to+\infty$ as $n\to+\infty$, $\Om_n:=\{x\in\R^3: |(\curlop A_n) (x)|>1\}$, and note that $\Omega_n$ is of finite measure. Denoting by $\chi_\Omega$ the characteristic function of $\Omega\subset\R^3$, by \eqref{eq:t2qest}, we get
		\begin{align*}
			\cJ(A_n)
			&\geq
			\frac{c}{q}\int_{\R^3} \min\{|\curlop A_n|^2,|\curlop A_n|^q\}\, dx-\|J\|_{\cA^*}\|A_n\|\\
			&=
			\frac{c}{q}\int_{\R^3\setminus\Omega_n} |\curlop A_n|^2\, dx+\frac{c}{q}\int_{\Omega_n} |\curlop A_n|^q\, dx-\|J\|_{\cA^*}\|A_n\|\\
			&\geq
			\frac{c}{2q}\min\big\{
			|(\curlop A_n)\chi_{\R^3\setminus\Omega_n}|_{2}^2
			+|(\curlop A_n)\chi_{\Omega_n}|^2_{q},
			|(\curlop A_n)\chi_{\R^3\setminus\Omega_n}|_{2}^q
			+|(\curlop A_n)\chi_{\Omega_n}|^q_{q}\big\}\\
			&\qquad
			-\|J\|_{\cA^*}\|A_n\|\\
			&\geq
			\frac{c}{2q}\min\big\{\|A_n\|^2,\|A_n\|^q\big\}-\|J\|_{\cA^*}\|A_n\|,
		\end{align*}
		where the last inequality follows from \cite[Proposition 2.13]{BadPisRol}.\\
		Thus, since $q>1$ we infer that $\cJ(A_n)\to+\infty$ as $n\to+\infty$.
	\end{proof}
	
	Now we are ready to prove our  main result.
	
	\begin{proof}[Proof of Theorem~\ref{th:mainBI}]
		First observe that, since $\cA$ is a closed subset of the reflexive space $\cL$, then it is reflexive too. Then for 
		$q\in (6/5,2)$, we are able to use the direct methods of the Calculus of Variations to show that there exists the global minimizer.\\
		Thus, since $\cJ$ is coercive and continuous,
		$$c=\inf_\cA \cJ>-\infty.$$
		Let us take a minimizing sequence $\{A_n\}\subset\cA$, namely such that $\cJ(A_n)\to c$. Then $\{A_n\}$ is bounded and $A_n\weakto A_0$ for some $A_0\in\cA$. 
		Since $\cD^{1,2}(\R^3,\R^3)$ and $\cD^{1,q}(\R^3,\R^3)$ are compactly embedded into $L^2_{loc}(\R^3,\R^3)$, passing to a subsequence we may assume that  $A_n\to A_0$ a.e. in $\R^3$. Thus, by the Fatou's lemma $J(A_0)=c$ and so $A_0$ is a critical point of $\cJ$ in $\cA$.\\
		Moreover $A_0$ is the unique global minimizer of $\J$ in $\cA$ since, the strict convexity implies that, for all $B\in\cA\setminus\{0\}$,
		\[
		\J(A_0)=\J(A_0)+\J'(A_0)[B]<\J(A_0+B)
		\]
		and, if $J\neq0$, then $A_0\neq 0$.\\
		Now, since $\cJ$ is invariant with respect to the $G$-action given by \eqref{actionG} as well as with respect to $S$-action given by \eqref{actionS}, by the Palais principle of symmetric criticality \cite{Palais}, any critical point of $\cJ$ on $\cA$ is a critical point of the unconstrained functional $\cJ$ in $\cD$. Since $\cD$ contains $\cC_0^{\infty}(\R^3,\R^3)$, we conclude.
	\end{proof}

	{\bf Acknowledgements.}
	We thank the Reviewers for their  comments, which improved the clarity and presentation of this work.
	P. d'Avenia is member of INdAM-GNAMPA and he was partly supported by PRIN 2017JPCAPN {\em Qualitative and quantitative aspects of nonlinear PDEs}, by European Union - Next Generation EU - PRIN 2022 PNRR
	P2022YFAJH {\em Linear and Nonlinear PDE's: New directions and
		Applications}, by INdAM-GNAMPA 2024 project {Metodi variazionali
		per alcune equazioni di tipo Choquard}, and by the Italian Ministry of University and Research under the Programme {\em Department of Excellence} L. 232/2016 (Grant No. CUP - D93C23000100001).
	J. Mederski was partly supported by the National Science Centre, Poland (Grant No. 2017/26/E/ST1/00817).

\end{document}